\newtheorem{thm}{Theorem}
\newtheorem{lem}{Lemma}
\newtheorem{prop}{Proposition}
\newtheorem{cor}{Corollary}
\newtheorem{dfn}{Definition}
\pgfplotsset{compat=newest}
\tikzstyle{line} = [ draw, -latex']  
\newcommand{\cI}{\mathcal{I}}
\newcommand{\cE}{\mathcal{E}}
\newcommand{\cH}{\mathcal{H}}
\newcommand{\cD}{\mathcal{D}}
\newcommand{\setM}{\mathbb{M}}
\title{Generator Subadditive Functions for  Mixed-Integer Programs} 
\author{Gustavo Angulo\thanks{gangulo@ing.puc.cl, Department of Industrial and Systems Engineering, Pontificia Universidad Católica de Chile, Santiago, Chile 7820436.} \and
Burak Kocuk\thanks{burakkocuk@sabanciuniv.edu, Industrial Engineering Program,  Sabanc{\i} University, Istanbul, Turkey 34956.} \and
Diego A. Mor{\'{a}}n R.\thanks{morand@rpi.edu, Industrial and Systems Engineering Department,  Rensselaer Polytechnic Institute, Troy, NY USA 12180-3590.}   
} 
\date{} 
\def \LF{ {\mathcal{L}}}
\def \FF{ {\mathcal{F}}}
\def \rr{ {\mathbb{R}}}
\def \zz{ {\mathbb{Z}}}
\def \qq{ {\mathbb{Q}}}
\def \genX{ {\mathcal{M}}}
\def \genXsubsetSleq{S}
\def \genXcont{ {\mathcal{R}}}
\def \conv{\textup{conv}}
\def \int{ {\textup{int}}}
\def \tq{{\,:\,}}
\def \RR{\mathbb{R}}
\def \fancymu{{\hat\mu}}
\newcommand{\BK}[1]{{\color{red} [BK] #1}}
\begin{document}
\maketitle

\begin{abstract}
For equality-constrained linear mixed-integer programs (MIP) defined by rational data, it is known that the subadditive dual is a strong dual and that there exists an optimal solution of a particular form, termed {\em generator subadditive function}. Motivated by these results, we explore the connection between Lagrangian duality, subadditive duality and generator subadditive functions for general equality-constrained MIPs where the vector of variables is constrained to be in a monoid. We show that  strong duality holds via generator subadditive functions under certain conditions. For the case when the monoid is defined by the set of all mixed-integer points contained in a convex cone, we show that strong duality holds under milder conditions and over a more restrictive set of dual functions. Finally, we provide some examples of applications of our results.
\end{abstract}
\noindent{\bf Keywords:}
    mixed-integer programming; Lagrangian duality; subadditive duality; generator functions; conic programming





\section{Introduction}
\label{sec:intro}

Consider the following equality-constrained mixed-integer program (MIP)
\begin{equation}\label{eq:genX_MIP}
\begin{aligned}
z^* :=   \inf  &\hspace{0.5em}  c^Tx + d^T y \\
  \mathrm{s.t.}   &\hspace{0.5em} A x + Gy = b  \\
  & \hspace{0.5em}  (x,y)  \in  \genX,
\end{aligned}
\end{equation}
where {$c \in \RR^{n_1}$, $d \in \RR^{n_2}$}, $A \in \RR^{m \times n_1}$, $G \in \RR^{m \times n_2}$, $b \in \RR^{m}$ and $\genX\subseteq\zz^{n_1}\times\rr^{n_2}$ with the usual $+$ operation is a {\em monoid}, that is, $(0,0)\in\genX$, $(x_1,y_1),(x_2,y_2)\in \genX$ implies $(x_1,y_1)+(x_2,y_2)\in\genX$ and $+$ is associative in~$\genX$. 
%

In this paper, we study certain dual problems associated to the primal MIP~\eqref{eq:genX_MIP} for several special cases.  Duality is a well-studied and fundamental concept in optimization theory and methodology \cite{johnson1974group, jeroslow1978cutting,Tind1981,Guzelsoy, Moran, kocuk2019subadditive}. For a minimization type \textit{primal} problem, a feasible  solution to its \textit{dual} problem can provide a lower bound to the optimal primal value, hence, it enables to certify the quality of a feasible solution. In addition, for a {\em strong dual} problem, its optimal value matches the optimal value of the primal problem, which can be used as a certificate of optimality. 

Encoding the dual problem can be quite different for different classes of  primal optimization problems. For example, the dual problem of a linear program (LP) or a conic program (CP) is another linear program or conic program,  involving the dual cone in the latter case (see, e.g., \cite{BenTal01}). Hence, these dual problems are finite dimensional optimization problems with similar representability as their primal counterparts. However, when integer variables are involved, the situation changes drastically. In the case of linear or conic MIPs, one typically resorts to a \textit{subadditive} dual, whose feasible region is defined over
subadditive functions, 
rather than finite dimensional elements as in the case of LPs and CPs (see,  \cite{johnson1974group, jeroslow1978cutting, Guzelsoy} for linear MIPs, and \cite{Moran, kocuk2019subadditive, AJAYI2020329} for conic inequality-constrained MIPs). Hence, it is incomparably more difficult to encode the dual problem when integer variables are involved.
As an example, the subadditive dual problem for the equality-constrained linear MIP, where we have $\genX= \mathbb{Z}_+^{n_1} \times \mathbb{R}_+^{n_2}$  in~\eqref{eq:genX_MIP}, is given as follows:\begin{equation}\label{eq:subadditiveDualLinear}
    \begin{aligned}
  \rho^* :=  \sup  &\hspace{0.5em}  f(b) \\
      \mathrm{s.t.}   &\hspace{0.5em} f(a^j) \le c_j &j=1,\dots,n_1 \\
      &\hspace{0.5em} \bar f(g^j) \le d_j &j=1,\dots,n_2 \\
      & \hspace{0.5em}  f(0) = 0, \ f:\rr^m\to\rr\ \text{ is subadditive}.
    \end{aligned}
    \end{equation}
Here, $a^j$ and $g^j$ respectively denote the $j$-th column of matrices $A$ and $G$.
The following result states that for feasible {linear} MIPs with rational data, the subadditive dual is a strong dual,  that is, the duality gap is zero (i.e., $\rho^*=z^*$) and the subadditive dual is solvable (i.e., there exists a feasible dual function $f$ such that $\rho^*=f(b)$). 
\begin{prop}[\cite{johnson1974group, jeroslow1978cutting}]\label{prop:strongSubadditiveLinear}
    Consider a feasible linear MIP (i.e., $\genX= \mathbb{Z}_+^{n_1} \times \mathbb{R}_+^{n_2}$) of the form \eqref{eq:genX_MIP}, where
      $A\in\qq^{m \times n_1}$, {$G\in\qq^{m \times n_2}$} and $b \in \qq^m$,
    and the subadditive dual as defined in~\eqref{eq:subadditiveDualLinear}.
    Then, the subadditive dual~\eqref{eq:subadditiveDualLinear} is a strong dual to linear MIP~\eqref{eq:genX_MIP}.
\end{prop}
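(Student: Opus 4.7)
My plan is to split the proof into weak duality, an explicit construction of a dual-feasible subadditive function achieving $z^*$, and the technical argument that relies on rationality of the data.

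\emph{Weak duality.} First I would show $\rho^* \le z^*$. Fix any dual-feasible subadditive $f$ with $f(0) = 0$ and any primal-feasible $(x,y)$ with $x \in \mathbb{Z}_+^{n_1}$, $y \in \mathbb{R}_+^{n_2}$. By iterated subadditivity (valid because the $x_j$ are nonnegative integers) together with the standard inequality $f(t g^j) \le t \bar f(g^j)$ for $t \ge 0$ coming from the definition of the upper limit $\bar f$, one gets
\[
f(b) \;=\; f\!\left(\textstyle\sum_j a^j x_j + \sum_j g^j y_j\right) \;\le\; \sum_{j=1}^{n_1} f(a^j)\, x_j + \sum_{j=1}^{n_2} \bar f(g^j)\, y_j \;\le\; c^\top x + d^\top y.
\]
Taking infimum over primal feasible points and supremum over dual feasible functions gives $\rho^* \le z^*$.

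\emph{Strong duality via the value function.} For the reverse inequality and solvability, the natural candidate is the value function
\[
v(r) \;:=\; \inf\{\,c^\top x + d^\top y \,:\, Ax + Gy = r,\ x \in \mathbb{Z}_+^{n_1},\ y \in \mathbb{R}_+^{n_2}\,\}.
\]
Assuming $v$ is real-valued on the relevant points, I would verify four things directly from the definition: (i) $v(0) = 0$; (ii) $v$ is subadditive, because concatenating feasible solutions for $r_1$ and $r_2$ gives a feasible solution for $r_1 + r_2$ with objective at most $v(r_1) + v(r_2)$; (iii) $v(a^j) \le c_j$ by taking $x = e_j$, $y = 0$; and (iv) $\bar v(g^j) \le d_j$ by taking $x = 0$, $y = t e_j$ and letting $t \to \infty$ after dividing by $t$. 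Since $v(b) = z^*$, this would produce a dual-feasible $f^\star := v$ attaining $z^*$, closing the gap.

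\emph{Main obstacle.} The delicate point is precisely that $v$ need not be finite on all of $\mathbb{R}^m$: outside the set of attainable right-hand sides it is $+\infty$, and in principle it could be $-\infty$ somewhere. A real-valued subadditive function $f$ on $\mathbb{R}^m$ is required by the dual \eqref{eq:subadditiveDualLinear}, so one cannot simply take $f = v$. This is exactly where rationality of $A$, $G$, $b$ is essential. I would invoke the structural results of Blair--Jeroslow \cite{jeroslow1978cutting} (equivalently the Gomory function representation): for rational data one can construct a finite, everywhere real-valued, subadditive function built from rational linear combinations, maxima, and ceiling operations that coincides with $v$ on the attainable right-hand sides and satisfies the dual constraints at the column vectors $a^j$, $g^j$. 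This Gomory-function extension is the technical heart of the proof and the step that genuinely requires the rationality hypothesis; the rest reduces to checking the four properties above.
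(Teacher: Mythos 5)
The paper states this proposition without proof, citing the classical literature, and your outline reproduces exactly the standard argument from those sources: weak duality by iterated subadditivity plus the $f(\lambda\omega)\le\lambda\bar f(\omega)$ inequality, the value function as the natural dual candidate, and the Blair--Jeroslow/Gomory-function construction to obtain an everywhere-finite subadditive extension, which you correctly identify as the step where rationality enters. One small slip: since $\bar v(g^j)$ is defined as a limit superior of $v(\delta g^j)/\delta$ as $\delta\to 0^+$, you should take $t\to 0^+$ (or simply note $v(t g^j)\le t\,d_j$ for all $t>0$) rather than $t\to\infty$; this does not affect the argument.
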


There have been some limited attempts to solve the subadditive dual problem in the case of linear MIPs for special problem classes  as subadditive duals do not admit practical solution procedures. For example, \cite{klabjan2004practical, Klabjan2007} consider a class of problems with equality constraints, rational and nonnegative data, and nonnegative integer variables. In this case, it is shown that the optimal function can be written as the value function of a parametric packing type of linear IP, whose feasible region is a relaxation of the original feasible region. Then, an iterative algorithm is proposed to obtain the optimal parameters. Later, \cite{cheung2016certificates} generalizes the setting to the case where there is no nonnegativity assumption on the data and continuous variables are present. 
In all these three papers, the optimal dual solution has a specific functional form as described below.
\begin{prop}[\cite{Klabjan2007, cheung2016certificates}]\label{prop:generatorLinear}
    Consider a feasible linear MIP 
    under the same assumptions of Proposition~\ref{prop:strongSubadditiveLinear}.
    Let $\Omega := \{A x + Gy\tq (x,y) \in \mathbb{Z}_+^{n_1} \times \mathbb{R}_+^{n_2} \}$ be the set of r.h.s. that make the primal problem~\eqref{eq:genX_MIP} feasible. 
    Then, there exists a vector $\alpha\in\rr^m$ for which the optimal function $F_\alpha : \Omega \to \mathbb{R}$ is of the form
    \begin{equation*} 
    F_\alpha(\omega) = \alpha^T \omega - \max \left \{ (A^T\alpha - c)^T x + (G^T\alpha - d)^T y : Ax + G y \le \omega, \ (x,y) \in \mathbb{Z}_+^{n_1} \times \mathbb{R}_+^{n_2} 
    \right \}.
    \end{equation*}
\end{prop}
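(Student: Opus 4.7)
The plan is to construct a specific $\alpha^*\in\rr^m$ for which $F_{\alpha^*}$ attains the optimum in the subadditive dual \eqref{eq:subadditiveDualLinear}. Writing $V_\alpha(\omega) := \max \{(A^T\alpha - c)^T x + (G^T\alpha - d)^T y \tq Ax + Gy \leq \omega, (x,y) \in \zz_+^{n_1} \times \rr_+^{n_2}\}$, we have $F_\alpha(\omega) = \alpha^T \omega - V_\alpha(\omega)$. The argument proceeds in three steps.

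First, I claim that $F_\alpha$ is feasible in \eqref{eq:subadditiveDualLinear} for \emph{every} $\alpha \in \rr^m$. Subadditivity of $F_\alpha$ and $F_\alpha(0)=0$ follow from superadditivity of $V_\alpha$ on $\Omega$ (the sum of feasible solutions at right-hand sides $\omega_1$ and $\omega_2$ is feasible at $\omega_1+\omega_2$). The inequalities $F_\alpha(a^j)\le c_j$ and $\bar F_\alpha(g^j)\le d_j$ follow by substituting the unit solutions $(e_j,0)$ into $V_\alpha(a^j)$ and $(0,\delta e_j)$ into $V_\alpha(\delta g^j)$, letting $\delta\to 0^+$ in the latter.

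Second, $F_\alpha(b)\le z^*$ holds for every $\alpha$: any primal optimum $(x^*,y^*)$ is feasible in $V_\alpha(b)$ with objective value $\alpha^T b-z^*$, so $V_\alpha(b)\ge \alpha^T b - z^*$. Setting $s:=b-Ax-Gy\ge 0$ yields the useful reformulation $F_\alpha(b)=\min\{c^T x+d^T y+\alpha^T s\tq Ax+Gy+s=b,\ (x,y,s)\in \zz_+^{n_1}\times \rr_+^{n_2}\times \rr_+^m\}$, so the task reduces to choosing $\alpha^*$ large enough to penalize the slacks and force $s=0$ at optimality.

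The third step, exhibiting such an $\alpha^*$, is the main obstacle. My approach is to invoke Proposition~\ref{prop:strongSubadditiveLinear} to produce a strong subadditive dual $f^*$ with $f^*(b)=z^*$, and then use subadditivity of $f^*$ together with weak duality to derive $z^* \le f^*(b-s)+f^*(s)\le Z(b-s)+f^*(s)$ for every $s\ge 0$ with $b-s\in\Omega$, where $Z$ denotes the value function of \eqref{eq:genX_MIP}. It then suffices to find $\alpha^*$ with $f^*(s)\le (\alpha^*)^T s$ on the relevant set, because substituting this bound into the reformulated $F_{\alpha^*}(b)$ yields $F_{\alpha^*}(b)\ge z^*$. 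Establishing existence of this linear dominator is the technical core; here the rationality hypothesis is essential, entering through the Blair--Jeroslow/Meyer characterization that a rational MIP value function admits a representation as a Gomory function, whose componentwise growth rates on $\rr_+^m$ supply the required bound on $\alpha^*$.
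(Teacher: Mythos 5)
Your Steps 1 and 2 are essentially sound and coincide with what the paper establishes in greater generality (Proposition~\ref{prop:generator} and the weak-duality bound~\eqref{eq:GenSadWeakDuality}), with one caveat: $F_\alpha$ is \emph{not} dual feasible for every $\alpha$. The inner maximization $V_\alpha(\omega)$ can equal $+\infty$ (take any $\alpha$ for which $(A^T\alpha-c,\,G^T\alpha-d)$ has positive inner product with a recession direction of $\genX^\leq(\omega)$), and then $F_\alpha$ is not a real-valued function on $\Omega$; this is exactly why the paper conditions all such statements on $F_\alpha$ being well-defined.

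The genuine gap is in Step 3. Your reduction --- it suffices to find $\alpha^*$ with $f^*(s)\le(\alpha^*)^Ts$ on the relevant slack set --- is valid, but that existence claim is where all the work lies, and the mechanism you propose does not deliver it. Optimal subadditive dual functions of rational MIPs (Gomory/Chv\'atal functions, Jeroslow formulas) are built from ceiling operations and are in general \emph{not} dominated above by any linear function near the origin: already $f(s)=\lceil s\rceil$ on $\rr_+$ has $f(\varepsilon)=1$ for every $\varepsilon\in(0,1]$, so no finite $\alpha$ gives $f(s)\le\alpha s$ there; equivalently, the directional growth rates $\bar f(e_i)$ that would have to serve as the components of $\alpha^*$ are typically $+\infty$. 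Since the slacks $s=b-Ax-Gy$ include points arbitrarily close to $0$ whenever continuous variables are present, this is an obstruction, not a technicality. (Note also that the one dual-optimal function for which linear domination is manifest is $F_{\alpha^*}$ itself, since $V_{\alpha^*}\ge0$ --- so leaning on ``some optimal $f^*$'' risks circularity.) The paper's route, via Theorem~\ref{thm:strong+generator} of which this proposition is the linear special case, sidesteps the issue: by Meyer's theorem $\conv(\genX^\leq(b))$ is a rational polyhedron, Theorem~\ref{thm:opt_convX_equals_X} (via Lemma~\ref{lemFintegerhull}) shows that intersecting with $\{Ax+Gy=b\}$ and optimizing over the hull still gives $z^*$, and LP strong duality for that polyhedral program hands you $\alpha^*$ directly as the multiplier of the equality constraints, with the multipliers on the hull description absorbing exactly the quantity you are trying to bound. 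If you want to rescue your route, you would need to exhibit a specific optimal $f^*$ with $\bar{f^*}(e_i)<\infty$ for all $i$ and then apply Theorem~\ref{g_bar_ineq}; the hull/LP-duality argument is the standard way to manufacture such an object.
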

The functions appearing in Proposition~\ref{prop:generatorLinear} are called {\em generator subadditive functions} and, in principle, are still challenging to compute as the formula requires to solve a linear MIP. Due to the following result, the size of this linear MIP can be decreased and the computational burden can be  reduced.
\begin{prop}[\cite{Klabjan2007, cheung2016certificates}]\label{prop:generatorLinear-withE}
    Consider a feasible linear MIP 
    under the same assumptions of Proposition~\ref{prop:generatorLinear}. Let $\alpha\in\rr^m$ and define the sets $E_x :=\{ j : \alpha^T a^j > c_j \text{ or } a^j \not\ge0\}$ and $E_y :=\{ j :  \alpha^T g^j > d_j \text{ or } g^j \not\ge0 \}$. Then,
    \begin{equation*} 
    F_\alpha(\omega) = \alpha^T \omega - \max \left \{ \sum_{j \in E_x } (\alpha^T a^j - c_j) x_j + \sum_{j\in E_y} ( \alpha^T g^j - d_j) y_j : \sum_{j \in E_x } a^j  x_j + \sum_{j\in E_y} g^j y_j \le \omega, \ (x,y)  \in \mathbb{Z}_+^{n_1} \times \mathbb{R}_+^{n_2}  \right \}.
    \end{equation*}
\end{prop}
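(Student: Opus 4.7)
The plan is to prove equality between the two representations of $F_\alpha$ by showing that removing the variables indexed by the complements of $E_x$ and $E_y$ from the inner maximization in Proposition~\ref{prop:generatorLinear} preserves both feasibility and optimal value. Denote the reduced maximization (over $j\in E_x$, $j\in E_y$) by $M_E(\omega)$ and the full one (over all $j$) by $M(\omega)$; it suffices to verify $M_E(\omega) = M(\omega)$.

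First I would establish $M_E(\omega) \le M(\omega)$. Any point feasible for the reduced problem lifts to a point feasible for the full problem by padding the missing coordinates with zeros (which respects $\mathbb{Z}_+^{n_1}\times\rr_+^{n_2}$), and the objectives agree under this lifting; hence the inequality is immediate.

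For the reverse direction $M_E(\omega)\ge M(\omega)$, I would take any $(x,y)\in\mathbb{Z}_+^{n_1}\times\rr_+^{n_2}$ feasible in the full problem and define $(\hat x,\hat y)$ by setting $\hat x_j=x_j$ when $j\in E_x$ and $\hat x_j=0$ otherwise (analogously for $\hat y$). The key observations are the definitions of $E_x$ and $E_y$: for each $j\notin E_x$ we have simultaneously $a^j\ge 0$ and $\alpha^T a^j\le c_j$ (and similarly for $j\notin E_y$ with $g^j$ and $d_j$). The first condition together with $x_j\ge 0$ implies $a^j x_j\ge 0$, so dropping these terms can only decrease the left-hand side of $Ax+Gy\le\omega$; hence $(\hat x,\hat y)$ is feasible for the reduced problem. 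The second condition implies $(\alpha^T a^j-c_j)x_j\le 0$, so dropping these terms in the objective can only weakly increase its value. Therefore $(\hat x,\hat y)$ attains an objective at least as large as $(x,y)$, yielding $M_E(\omega)\ge M(\omega)$.

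The argument is essentially a direct variable-fixing reduction, so no serious obstacle is expected. The only point needing care is that both quantities be well defined: if the supremum in $M(\omega)$ is not attained, I would carry out the same truncation argument on an $\varepsilon$-optimal sequence and pass to the limit, using that the truncation operation preserves the lifting/restriction correspondence and that $\omega\in\Omega$ guarantees $F_\alpha(\omega)$ is finite via Proposition~\ref{prop:generatorLinear}.
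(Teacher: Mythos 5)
Your proof is correct and uses essentially the same mechanism the paper relies on: the paper cites this result but proves its generalization (Proposition~\ref{prop:generatorLinear-withE-block}) by exactly this variable-fixing idea --- zeroing out the components whose columns are nonnegative (there: in the dual cone) and whose reduced costs are nonpositive preserves feasibility of the $\le$ constraint and weakly increases the objective. Your direct two-inequality formulation, including the $\varepsilon$-optimal sequence to cover non-attainment, is if anything slightly more careful than the paper's contradiction argument on optimal solutions, but it is not a different route.
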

We would like to note that the above approaches are not the only attempts to solve the subadditive dual for linear MIPs. 
{A piecewise linear approximation of the subadditive dual problem of the mixed-integer program of binarized neural networks is utilized in~\cite{aspman2024taming}.}

Our paper extends the main results in the three papers~\cite{klabjan2004practical, Klabjan2007, cheung2016certificates} mentioned above to the case of general equality-constrained  MIPs~\eqref{eq:genX_MIP} as follows: 
%
\begin{itemize}
    \item 
 {Strong subadditive duality for conic MIPs:} We generalize Proposition~\ref{prop:strongSubadditiveLinear} in Proposition~\ref{prop:strongDual}. 
    \item 
 {Optimal generator subadditive functions:} We generalize Proposition~\ref{prop:generatorLinear} in Theorem~\ref{thm:strong+generator}.
\item 
 {Reducing the size for  conic MIPs with block structure:} We generalize Proposition~\ref{prop:generatorLinear-withE} in Proposition~\ref{prop:generatorLinear-withE-block}.
\end{itemize}




{
The rest of the paper is structured as follows: In Section~\ref{sec:duality-MIP}, we explore the connections between Lagrangian duality, subadditive duality and generator subadditive functions for  equality-constrained  MIPs where the vector of variables comes from a monoid. In Section~\ref{sec:duality-conic-MIP}, we specialize our results when the monoid is defined as the set of the mixed-integer points contained in a regular cone.
Finally, we provide  illustrations of our results in Section~\ref{sec:illustration}.
}

\section{Duality for equality-constrained mixed-integer programs}
\label{sec:duality-MIP}

The paper~\cite{Klabjan2007} established the relationship between {\em generator subadditive functions} with Lagrangian and subadditive duality for integer linear programs. In this section, we generalize these results for problem~\eqref{eq:genX_MIP} and the associated dual problems.

\subsection{Lagrangian duality and generator subadditive functions}
\label{sec:main-Lagr}

Let $\Omega^\leq=\{\omega\tq A x + Gy\leq \omega\ \textup{for some}\ (x,y) \in \genX\}$ and for $\omega\in \Omega^\leq$ define
$\genX^\leq(\omega)=\{(x,y) \in \genX \tq Ax+Gy\leq \omega\}$. Suppose $\genXsubsetSleq\subseteq \genX^\leq(b)$ and consider the optimization problem
\begin{equation}\label{eq:genX_MIP_lagrange}
\begin{aligned}
z^\leq :=   \inf  &\hspace{0.5em}  c^Tx + d^T y \\
  \mathrm{s.t.}   &\hspace{0.5em} A x + Gy = b  \\
  & \hspace{0.5em}  (x,y)  \in  \genXsubsetSleq.
\end{aligned}
\end{equation}
For  $\alpha\in\rr^m$, define $\LF(\alpha):=\inf\{c^Tx+d^Ty+\alpha^T(b-Ax-Gy)\tq(x,y)\in \genXsubsetSleq\}$.
Since we are dualizing equality constraints, it is easy to see that the value $\LF(\alpha)$ gives a lower bound for the optimal value of \eqref{eq:genX_MIP_lagrange}, that is,
\begin{equation}\label{eq:LagrangianWeakDuality}
\LF(\alpha)\leq \inf\{c^Tx+d^Ty\tq Ax+Gy= b,\ (x,y)\in \genXsubsetSleq\}.
\end{equation}
For a set $X\subseteq\rr^n$, we denote 
its convex hull as $\conv(X)$. 
In the Lagrangian duality theory (e.g., see~\cite{DMM2024,geoffrion2010lagrangian} for the integer programming case), there are sufficient conditions for the existence of $\alpha^*\in\rr$ such~that
\begin{equation}\label{GeoffrionsTheoremHolds}
    \LF(\alpha^*)=\inf\{c^Tx+d^Ty\tq Ax+Gy= b,\ (x,y)\in \conv(\genXsubsetSleq)\}.
\end{equation}
Note that $\alpha^*$ satisfies  $\LF(\alpha^*)=\sup\{\LF(\alpha)\tq \alpha\in\rr^m\}$, and thus is an optimal solution to the Lagrangian dual problem. In principle, the bound in~\eqref{GeoffrionsTheoremHolds} may be strictly less than the optimal value of problem~\eqref{eq:genX_MIP_lagrange} since the minimization is over $\conv(\genXsubsetSleq)\supseteq\genXsubsetSleq$. Nevertheless, we will show in Theorem~\ref{thm:opt_convX_equals_X} below that 
\begin{equation}\label{eq:opt_convX_equals_X}
\inf\{c^Tx+d^Ty\tq Ax+Gy= b,\ (x,y)\in \conv(\genXsubsetSleq)\}=\inf\{c^Tx+d^Ty\tq Ax+Gy= b,\ (x,y)\in \genXsubsetSleq\}.
\end{equation}
Therefore, if \eqref{GeoffrionsTheoremHolds} is satisfied, then any optimal solution $\alpha^*$ to the Lagrangian dual problem satisfies:
$$\LF(\alpha^*)=\inf\{c^Tx+d^Ty\tq Ax+Gy= b,\ (x,y)\in \genXsubsetSleq\}.$$
In order to prove Theorem~\ref{thm:opt_convX_equals_X}, we need the following lemma.  


\begin{lem} \label{lemFintegerhull}
  Let $\setM\subseteq \rr^n$ be a set and let  $X\subseteq \{x \in \setM\tq Ax\leq b\}$ be a  set such that $\conv(X)\cap \setM=X$. Assume that $T=\{x\in \conv(X)\tq Ax= b\}$ is a nonempty face of $\conv(X)$. Then $\conv(X)\cap T=\conv(T\cap \setM)$.  
\end{lem}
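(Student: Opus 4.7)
The plan is to first reduce the claim $\conv(X)\cap T=\conv(T\cap \setM)$ to the equality $T=\conv(T\cap \setM)$. By the very definition of $T$, one has $T\subseteq \conv(X)$, so $\conv(X)\cap T=T$, and the lemma becomes $T=\conv(T\cap \setM)$. The inclusion $\conv(T\cap \setM)\subseteq T$ is then immediate since $T$ is convex (being a face of $\conv(X)$) and contains $T\cap \setM$.

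The substance of the proof lies in the reverse inclusion $T\subseteq \conv(T\cap \setM)$. I would fix $x\in T\subseteq\conv(X)$ and write $x=\sum_{i=1}^{k}\lambda_i x_i$ with $x_i\in X$, $\lambda_i>0$, and $\sum_{i=1}^{k}\lambda_i=1$. The crucial step is to argue that each $x_i$ already lies in $T$. This is the face property of $T$: if a point of a face is a convex combination with strictly positive coefficients of points in the ambient convex set, then all of those points must lie in the face. The two-point version is the standard definition of a face, and the extension to $k$ points is a straightforward induction on $k$. I would peel off one summand by writing $x=\lambda_1 x_1+(1-\lambda_1)z$ with $z=\sum_{i\geq 2}\tfrac{\lambda_i}{1-\lambda_1}x_i\in\conv(X)$, apply the two-point face definition to conclude $x_1,z\in T$, and then recurse on the shorter combination representing $z$.

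Once each $x_i$ is known to lie in $T$, the inclusion $X\subseteq \setM$ yields $x_i\in T\cap\setM$ for every $i$, and hence $x=\sum_{i=1}^{k}\lambda_i x_i\in\conv(T\cap\setM)$, completing the nontrivial direction. The main (and really only) obstacle is the clean multi-point formulation of the face property; beyond that, the argument is bookkeeping. I note in passing that the hypothesis $\conv(X)\cap\setM=X$ does not appear to be used in the proof of the lemma itself, but it does force $T\cap\setM$ to coincide with $X\cap T$, which is presumably the form in which the lemma is invoked in the subsequent theorem.
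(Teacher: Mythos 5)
Your proof is correct, but it takes a different route from the paper's. The paper's proof is a two-line reduction: it uses the hypothesis $\conv(X)\cap\setM=X$ (together with $X\subseteq\setM$) to rewrite $\conv(X)\cap T$ as $\conv(\conv(X)\cap\setM)\cap T$, and then invokes as a black box the ``well-known fact'' that $\conv(X'\cap\setM')\cap T'=\conv(T'\cap\setM')$ for any set $\setM'$, convex set $X'$, and face $T'$ of $X'$. You instead prove the needed instance of that fact from first principles: after the (correct) observation that $T\subseteq\conv(X)$ collapses the claim to $T=\conv(T\cap\setM)$, you handle the nontrivial inclusion by expressing $x\in T$ as a finite convex combination of points of $X$ and propagating membership in $T$ to every summand via the multi-point face property, which you rightly justify by induction from the two-point definition. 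This yields the slightly stronger conclusion $T=\conv(T\cap X)$, from which $T=\conv(T\cap\setM)$ follows since $T\cap X\subseteq T\cap\setM\subseteq T$. Your side remark is also accurate: the hypothesis $\conv(X)\cap\setM=X$ plays no role in your argument (only $X\subseteq\setM$ and the face assumption are used), whereas the paper needs it precisely to massage the statement into the form of the cited fact; it is then used again downstream in Theorem~\ref{thm:opt_convX_equals_X} to identify $T\cap\setM$ with $T\cap X$. The trade-off is the usual one: the paper's proof is shorter but leans on an external result, while yours is self-contained and exposes exactly which hypotheses carry the weight.
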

\begin{proof}
    By assumption and since $X\subseteq \setM$, we have $\conv(X)\cap \setM=X=X\cap \setM$. Therefore,
    $$\conv(X)\cap T=\conv(X\cap \setM)\cap T=\conv(\conv(X)\cap \setM)\cap T=\conv(T\cap \setM),$$
    \noindent where the last equality follows from the well-known fact that for any set $\setM'$, convex set $X'\subseteq \rr^n$ and face $T'$ of $X'$, we have $\conv(X'\cap \setM')\cap T'=\conv(T'\cap \setM')$.
\end{proof}

\begin{thm}\label{thm:opt_convX_equals_X}
    Assume that $\conv(\genXsubsetSleq)\cap \genX
    = \genXsubsetSleq$
    and  that $T=\{(x,y) \in\rr^{n_1}\times\rr^{n_2}\tq Ax+Gy= b,\ (x,y) \in \conv(\genXsubsetSleq)\}$ is nonempty. Then, equation~\eqref{eq:opt_convX_equals_X} is satisfied.
\end{thm}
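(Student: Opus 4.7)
The plan is to reduce the claim to Lemma~\ref{lemFintegerhull} applied with the identification $\setM \leftrightarrow \genX$ and $X \leftrightarrow \genXsubsetSleq$, with the affine constraint $Ax + Gy \leq b$ playing the role of $Ax \leq b$. Three hypotheses of that lemma need to be checked: (a) the containment $\genXsubsetSleq \subseteq \{(x,y) \in \genX \tq Ax + Gy \leq b\}$, which is exactly the standing assumption $\genXsubsetSleq \subseteq \genX^{\leq}(b)$; (b) the identity $\conv(\genXsubsetSleq) \cap \genX = \genXsubsetSleq$, which is one of the assumptions of the theorem; and (c) that $T$ is a nonempty face of $\conv(\genXsubsetSleq)$.

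For (c), the key observation is that the inclusion $\genXsubsetSleq \subseteq \{(x,y) \tq Ax + Gy \leq b\}$ passes to convex hulls, so $\conv(\genXsubsetSleq) \subseteq \{(x,y) \tq Ax+Gy \leq b\}$. Consequently the affine subspace $\{Ax + Gy = b\}$ cuts $\conv(\genXsubsetSleq)$ along a supporting (exposed) face, which is precisely $T$; nonemptiness is assumed. This is the only delicate point in the argument; after this, the proof is bookkeeping.

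With the lemma in hand, $\conv(\genXsubsetSleq) \cap T = \conv(T \cap \genX)$. Since $T \subseteq \conv(\genXsubsetSleq)$, the left-hand side collapses to $T$, so $T = \conv(T \cap \genX)$. Moreover,
$$T \cap \genX = \bigl(\conv(\genXsubsetSleq) \cap \{Ax+Gy=b\}\bigr) \cap \genX = \bigl(\conv(\genXsubsetSleq) \cap \genX\bigr) \cap \{Ax+Gy=b\} = \genXsubsetSleq \cap \{Ax+Gy=b\},$$
where the last equality uses hypothesis~(b). Therefore $T = \conv\bigl(\{(x,y) \in \genXsubsetSleq \tq Ax+Gy=b\}\bigr)$.

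Finally, I would invoke the standard fact that the infimum of a linear functional over a set equals its infimum over the convex hull of that set. Applied to $c^T x + d^T y$ over $T \cap \genX$, this gives
$$\inf\{c^T x + d^T y \tq (x,y) \in T\} \;=\; \inf\{c^T x + d^T y \tq (x,y) \in T \cap \genX\},$$
and rewriting both sides in the original notation yields exactly \eqref{eq:opt_convX_equals_X}. No compactness or closedness is needed because the identity holds at the level of infima.
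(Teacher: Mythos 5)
Your proof is correct and follows essentially the same route as the paper's: both reduce the claim to Lemma~\ref{lemFintegerhull} with $X=\genXsubsetSleq$ and $\setM=\genX$, and then conclude via the fact that minimizing a linear function over $T\cap\genX$ is the same as minimizing over its convex hull $T$. The only difference is that you explicitly verify that $T$ is an exposed face of $\conv(\genXsubsetSleq)$ (since $\conv(\genXsubsetSleq)\subseteq\{Ax+Gy\leq b\}$), a hypothesis of the lemma that the paper's proof leaves implicit; this is a welcome addition rather than a deviation.
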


\begin{proof}
Denoting $z^{\conv}=\inf\{c^Tx+d^Ty\tq Ax+Gy= b,\ (x,y)\in \conv(\genXsubsetSleq)\}$, we have  \begin{align*}
    z^{\conv}&=\inf\{c^Tx+d^Ty\tq(x,y)\in \conv(\genXsubsetSleq)\cap T\}
    =\inf\{c^Tx+d^Ty\tq(x,y)\in \conv(T\cap\genX)\}\\
    &=\inf\{c^Tx+d^Ty\tq(x,y) \in T\cap\genX\}
    =\inf\{c^Tx+d^Ty\tq Ax+Gy= b,\ (x,y) \in \genXsubsetSleq\}=z^\leq,
\end{align*}
\noindent where the first equality follows by definition of $T$, the second equality follows from applying Lemma \ref{lemFintegerhull} to $X=\genXsubsetSleq$ and $\setM=\genX$, the third equality follows from the fact optimizing a linear function over a set is equivalent to optimize over its convex hull, and the last equality follows from the definition of $T$ and $\conv(\genXsubsetSleq)\cap \genX = \genXsubsetSleq$.
\end{proof}

Note that $\conv(\genX^\leq(\omega))\cap \genX= \genX^\leq(\omega)$ for any $\omega\in\Omega$ since $\genX^\leq(\omega)$ are points in $\genX$ contained in  the convex set $\{(x,y) \in \rr^{n_1}\times\rr^{n_2} \tq Ax+Gy\leq \omega\}$. Assuming that the feasible region of \eqref{eq:genX_MIP} is nonempty and that \eqref{GeoffrionsTheoremHolds} holds, we can apply Theorem~\ref{thm:opt_convX_equals_X}  to $\genXsubsetSleq=\genX^\leq(b)$ and obtain that there exist $\alpha^*\in\rr$ such that $\LF(\alpha^*)=\inf\{c^Tx+d^Ty\tq Ax+Gy= b,\ (x,y) \in \genX^\leq(b)\}$.
The relationship of Lagrangian duality and {\em generator subadditive functions} is established by noting that
\begin{align*}
    \LF(\alpha^*)&=\inf\{c^Tx+d^Ty+{\alpha^*}^T(b-Ax-Gy)\tq(x,y)\in \genX^\leq(b)\}\\
    &={\alpha^*}^Tb+\inf\{(c-A^T\alpha^*)^Tx+(d-G^T\alpha^*)^Ty\tq(x,y)\in \genX^\leq(b)\}\\
    &={\alpha^*}^Tb-\sup\{(A^T\alpha^*-c)^T x+(G^T\alpha^*-d)^Ty\tq(x,y)\in \genX^\leq(b)\} .
\end{align*}

We generalize the definition in~\cite{Klabjan2007} as follows.

\begin{dfn}[Generator subadditive function]
    Consider a feasible  MIP of the form~\eqref{eq:genX_MIP}. 
    Then,  for $\alpha \in \rr^m$, we define a generator subadditive function
    $F_\alpha:\Omega^\leq \to \rr\cup\{-\infty\}$ as
     \begin{equation} \label{eq:generator} F_\alpha(\omega) = \alpha^T \omega - \sup \left \{ (A^T\alpha - c)^T x + (G^T\alpha - d)^T y\, :\, (x,y)\in \genX^\leq(\omega)\right \}.
        \end{equation}
\end{dfn}
Note that by the previous discussion on Lagrangian duality from~\eqref{eq:LagrangianWeakDuality}, we have that
\begin{equation}\label{eq:GenSadWeakDuality}
F_\alpha(b)\leq \inf\{c^Tx+d^Ty\tq Ax+Gy= b,\ (x,y)\in \genX^\leq(b)\}=z^*,
\end{equation}
\noindent and therefore we can use the function $F_\alpha$ to find a lower bound for the optimal value of the primal problem~\eqref{eq:genX_MIP} -- this property is related to the subadditive dual being a {\em weak dual}, we will see this in the next section.

\subsection{Subadditive duality}
In order to state the subadditive dual for the primal problem~\eqref{eq:genX_MIP}, we need some definitions.

\begin{dfn}[Subadditive  function]
A function $f:\cD \to \RR$ is subadditive if $f(u+v) \le f(u) + f(v)$ for all $u,v \in \cD$ such that $u+v\in \cD$. 
We denote the set of subadditive functions $f:\cD\to\rr$ 
as $\mathcal{F}_\cD$ and for $f\in\mathcal{F}_\cD$, we define $\bar f(x) := \lim\sup_{\delta\to0^+}\frac{f(\delta x )}{\delta}$.
\end{dfn}

\begin{dfn}[Integral generating set]
Let $\genX$ with the usual $+$ operation be a monoid. A generating set of $\genX$ is a set $\cI(\genX)\subseteq \genX$ such that for any $w \in\genX$ there exists $h_1,\ldots,h_p\in \cI(\genX) $ and $\lambda_1,\ldots,\lambda_p\in\zz_+$ such that
 $ w=\sum_{i=1}^p\lambda_ih_i$.
\end{dfn}

For a general monoid, integral generating sets are not finite. Conditions for finiteness of the generating set are given in~\cite{jeroslowmonoids1978,hemmecke2007representation,DM2013}. 

Let $\Omega=\{A x + Gy\tq (x,y) \in \genX\}$ be 
the set of r.h.s. that make the primal problem~\eqref{eq:genX_MIP} feasible. 
Let $\cD\subseteq \rr^m$ such that $\cD\supseteq\Omega$. Consider the following subadditive dual problem of~\eqref{eq:genX_MIP}:
\begin{equation}\label{eq:dual_genX_MIP}
    \begin{aligned}
    \rho^* := \sup  &\hspace{0.5em}  f(b) \\
      \mathrm{s.t.}   &\hspace{0.5em} f(A u+Gv) \le c^T u + d^Tv& (u,v) \in \cI(\genX )\\
      & \hspace{0.5em}  f(0) = 0, \ f \in \mathcal{F}_\cD.
    \end{aligned}
    \end{equation}

The following lemma shows that the choice of the generating set does not change the associated constraints in the dual problem.
\begin{lem}\label{lem:IntGenSet-indifferent} Let $\cI(\genX )$, $\cI'(\genX )$ be two generating sets of $\genX$. Then, for any $f\in\FF_D$ with $f(0)=0$, we have that
 $f(A u+Gv) \le c^T u + d^Tv\ \textup{for all}\ (u,v) \in \cI(\genX )$
 is equivalent to $f(A u+Gv) \le c^T u + d^Tv\ \textup{for all}\  (u,v) \in \cI'(\genX )$.
\end{lem}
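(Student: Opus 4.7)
The plan is to prove one implication; the reverse follows by the symmetric argument, since $\cI(\genX)$ and $\cI'(\genX)$ play interchangeable roles. The core idea is that subadditivity, combined with the normalization $f(0)=0$, propagates the inequality $f(Au+Gv)\le c^Tu+d^Tv$ from any generating set to arbitrary nonnegative integer combinations of its elements, and in particular to the generators of the other generating set.

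First I would isolate the following auxiliary fact: if $f\in\FF_\cD$ satisfies $f(0)=0$ and $f(Au_i+Gv_i)\le c^Tu_i+d^Tv_i$ for finitely many $(u_i,v_i)\in\genX$, then for any $\lambda_1,\ldots,\lambda_p\in\zz_+$, the element $(u,v):=\sum_{i=1}^p\lambda_i(u_i,v_i)\in\genX$ satisfies $f(Au+Gv)\le c^Tu+d^Tv$. I would prove this by induction on $N:=\sum_i\lambda_i$. The base case $N=0$ gives $(u,v)=(0,0)$ and matches $f(0)=0\le 0$. For $N\ge 1$ pick some index $k$ with $\lambda_k\ge 1$ and set $(u',v'):=(u-u_k,v-v_k)$, which still lies in $\genX$. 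Since $\genX$ is a monoid, both $Au'+Gv'$ and $Au_k+Gv_k$ belong to $\Omega\subseteq\cD$, as does their sum $Au+Gv$; subadditivity then yields
\[
f(Au+Gv)\le f(Au'+Gv')+f(Au_k+Gv_k)\le (c^Tu'+d^Tv')+(c^Tu_k+d^Tv_k)=c^Tu+d^Tv,
\]
where the middle inequality combines the inductive hypothesis on $(u',v')$ with the given bound on $(u_k,v_k)$.

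With the auxiliary fact in hand, the lemma follows immediately. Assume $f(Au+Gv)\le c^Tu+d^Tv$ for all $(u,v)\in\cI(\genX)$, and pick an arbitrary $(u',v')\in\cI'(\genX)\subseteq\genX$. Because $\cI(\genX)$ is a generating set of $\genX$, we can write $(u',v')=\sum_{i=1}^p\lambda_i(u_i,v_i)$ with $(u_i,v_i)\in\cI(\genX)$ and $\lambda_i\in\zz_+$, and applying the auxiliary fact delivers $f(Au'+Gv')\le c^Tu'+d^Tv'$. Swapping the roles of $\cI(\genX)$ and $\cI'(\genX)$ gives the converse direction.

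There is no genuine obstacle here; the only point requiring care is verifying that every intermediate sum lies in the domain $\cD$ where subadditivity is available. This is secured by $\cD\supseteq\Omega$ together with the fact that $\Omega=\{Au+Gv:(u,v)\in\genX\}$ is closed under addition, which holds precisely because $\genX$ is a monoid. Thus every partial sum $A\bigl(\sum_{i\le j}\lambda_iu_i\bigr)+G\bigl(\sum_{i\le j}\lambda_iv_i\bigr)$ remains in $\Omega\subseteq\cD$, so the inductive application of the subadditive inequality is legal at each step.
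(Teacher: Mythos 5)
Your proof is correct and follows essentially the same route as the paper: decompose an element of $\cI'(\genX)$ over $\cI(\genX)$ and propagate the inequality via subadditivity and $f(0)=0$. Your explicit induction on $\sum_i\lambda_i$ (together with the check that all intermediate sums stay in $\Omega\subseteq\cD$) merely spells out the step the paper compresses into the single inequality $f\bigl(\sum_i\lambda_i(Au_i+Gv_i)\bigr)\le\sum_i\lambda_i f(Au_i+Gv_i)$.
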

\begin{proof}
   Assume that $f(A u+Gv) \le c^T u + d^Tv\ \textup{for all}\  (u,v) \in \cI(\genX )$ and let $(u',v') \in \cI'(\genX )$. Then there exists  $(u_1,v_1),\ldots,(u_p,v_p)\in \cI(\genX) $ and $\lambda_1,\ldots,\lambda_p\in\zz_+$ such that $(u',v')=\sum_{i=1}^p\lambda_i(u_i,v_i)$. We have
   $$f(Au'+Gv')=  f\left( A  \sum_{i=1}^p \lambda_i u_i  + G  \sum_{i=1}^p \lambda_i v_i  \right)  \le  \sum_{i=1}^p \lambda_i f \left( A   u_i + G   v_i  \right) \le \sum_{i=1}^p \lambda_i (c^T u_i + d^T v_i)=  c^T u'+d^Tv',$$
\noindent where the first inequality follow from subadditivity of $f$ and $f(0)=0$. Therefore, if the function $f$ satisfies the constraints for the integral generating set  $\cI(\genX )$ then it also satisfies them for the integral generating set  $\cI'(\genX )$ (and viceversa).
\end{proof}

We show next that weak duality holds for the primal~\eqref{eq:genX_MIP} and dual~\eqref{eq:dual_genX_MIP}.

\begin{prop}[Weak duality]\label{prop:weakDual}
 For any vector $(x,y)$ feasible for~\eqref{eq:genX_MIP} and for any function $f$ feasible for~\eqref{eq:dual_genX_MIP},  we have that $f(b)\leq c^Tx+d^Ty$.
\end{prop}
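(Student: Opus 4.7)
The plan is to leverage the integral generating set of $\genX$ to express the primal-feasible point $(x,y)$ as a nonnegative integer combination of generators, and then apply subadditivity of $f$ termwise together with the dual feasibility constraints. This is essentially the same maneuver used in the proof of Lemma~\ref{lem:IntGenSet-indifferent}, applied directly to the point $(x,y)$ rather than to a generator from another generating set.

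Concretely, I would first fix $(x,y) \in \genX$ primal-feasible and $f$ dual-feasible. Since $(x,y)\in\genX$, the definition of integral generating set yields $(u_1,v_1),\ldots,(u_p,v_p)\in\cI(\genX)$ and $\lambda_1,\ldots,\lambda_p\in\zz_+$ with $(x,y)=\sum_{i=1}^p\lambda_i(u_i,v_i)$. Then I would compute
\[
f(b) = f(Ax+Gy) = f\!\left(\sum_{i=1}^p \lambda_i (Au_i+Gv_i)\right) \le \sum_{i=1}^p \lambda_i f(Au_i+Gv_i) \le \sum_{i=1}^p \lambda_i\,(c^T u_i + d^T v_i) = c^T x + d^T y,
\]
where the first inequality uses subadditivity of $f$ together with $f(0)=0$ (so that a positive integer multiple $\lambda_i$ can be pulled out by writing $\lambda_i$ as a sum of $1$'s, and zero multiples contribute nothing), and the second inequality uses dual feasibility of $f$ against each generator $(u_i,v_i)$.

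There is essentially no serious obstacle here: the only subtlety is the justification that subadditivity plus $f(0)=0$ yields $f(\sum \lambda_i w_i)\le \sum \lambda_i f(w_i)$ for $\lambda_i\in\zz_+$, which is a routine induction. Since this step is precisely what was used inside the proof of Lemma~\ref{lem:IntGenSet-indifferent}, I would simply invoke or re-derive it briefly. Note that the argument does not require $b$ or the data to be rational, nor any topological properties of $f$, so it applies in the full generality of the setup.
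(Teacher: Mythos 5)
Your proposal is correct and follows essentially the same route as the paper: the paper simply invokes Lemma~\ref{lem:IntGenSet-indifferent} with $\cI'(\genX)=\genX$ to get $f(Ax+Gy)\le c^Tx+d^Ty$ at the feasible point, which is exactly the generator-decomposition-plus-subadditivity computation you write out explicitly. No gap.
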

\begin{proof}
For $(x,y)$ feasible for~\eqref{eq:genX_MIP}, we have that $b=Ax+Gy$. Thus, the fact that $f(b)\leq c^Tx+d^Ty$ follows from $f(b)=f(Ax+Gy)$ and Lemma~\ref{lem:IntGenSet-indifferent} applied to $\cI'(\genX)=\genX$.
\end{proof}

Sufficient conditions for functional  duals like the dual~\eqref{eq:dual_genX_MIP} to be a {\em strong dual}  have been given in~\cite{Tind1981, Moran,kocuk2019subadditive,Flippo1996,johnson1974group,jeroslow1979minimal, santana2017some}. A simple condition 
to have strong duality is to consider functions defined on $\cD=\Omega$  as   the so-called {\em value function} $\nu_\genX$ defined for $ \omega\in\Omega$ as   
$$\nu_\genX(\omega)=\inf\{c^Tx+d^Ty\tq Ax+Gy=\omega, (x,y) \in \genX\},$$
\noindent is an optimal solution to the dual problem whenever $\nu_\genX(\omega)>-\infty$ for all $\omega\in\Omega$ (for instance, see~\cite{Tind1981}). 
On the other hand, when $\cD=\rr^m$, several sufficient conditions to have strong duality have been studied in~\cite{Moran,kocuk2019subadditive}.

We now show  that under some conditions, {\em generator subadditive functions} are feasible and/or optimal for the subadditive dual of \eqref{eq:genX_MIP} when $\cD=\Omega^\leq$. We note here that dual feasibility requires that the functions belong to the set $\FF_{\Omega^\leq}$ and hence we must have that the function $F_\alpha:\Omega^\leq \to \rr$ is well-defined, that is, $F_\alpha(\omega)>-\infty$ for all $\omega\in \Omega^\leq$. A sufficient condition for this to happen is   the boundedness of sets $\genX^\leq(b)$  for any $b\in \Omega^\leq$. However, more general conditions can be given. For instance, when problem~\eqref{eq:genX_MIP} is a conic MIP, a sufficient condition is the feasibility of its subadditive dual (see~\cite{kocuk2019subadditive}). 

The next proposition recovers results from \cite{Klabjan2007,cheung2016certificates}.

\begin{prop}\label{prop:generator}
    Consider a feasible conic MIP of the form~\eqref{eq:genX_MIP} and the subadditive dual of the form~\eqref{eq:dual_genX_MIP} with $\cD=\Omega^\leq$. 
    Then, if $F_\alpha:\Omega^\leq \to \rr$ is well-defined, then $F_\alpha$ is a feasible dual solution.         
\end{prop}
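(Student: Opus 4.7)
The plan is to verify the three defining conditions of dual feasibility for \eqref{eq:dual_genX_MIP} with $\cD=\Omega^\leq$: (a) $F_\alpha$ is subadditive on $\Omega^\leq$, (b) $F_\alpha(0)=0$, and (c) $F_\alpha(Au+Gv)\leq c^Tu+d^Tv$ for every $(u,v)\in\cI(\genX)$. Throughout, write
\[
V(\omega) := \sup\{(A^T\alpha-c)^Tx+(G^T\alpha-d)^Ty\,:\,(x,y)\in\genX^\leq(\omega)\},
\]
so that $F_\alpha(\omega)=\alpha^T\omega-V(\omega)$. The hypothesis that $F_\alpha$ is well-defined translates into $V(\omega)\in\rr$ for every $\omega\in\Omega^\leq$, which lets us manipulate these quantities without worrying about $\pm\infty$.

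For (a), since $\alpha^T\omega$ is linear, subadditivity of $F_\alpha$ is equivalent to superadditivity of $V$. The key step is to exploit the monoid structure of $\genX$: given $\omega_1,\omega_2\in\Omega^\leq$ with $\omega_1+\omega_2\in\Omega^\leq$ and any $(x_i,y_i)\in\genX^\leq(\omega_i)$, the sum $(x_1+x_2,y_1+y_2)$ lies in $\genX$ and satisfies $A(x_1+x_2)+G(y_1+y_2)\leq \omega_1+\omega_2$, hence lies in $\genX^\leq(\omega_1+\omega_2)$. Because the objective defining $V$ is linear in $(x,y)$, taking suprema over the two summands yields $V(\omega_1)+V(\omega_2)\leq V(\omega_1+\omega_2)$, i.e.\ $F_\alpha(\omega_1+\omega_2)\leq F_\alpha(\omega_1)+F_\alpha(\omega_2)$.

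For (b), note that $(0,0)\in\genX^\leq(0)$ gives $V(0)\geq 0$. Applying the superadditivity from (a) to $\omega_1=\omega_2=0$ gives $2V(0)\leq V(0)$, hence $V(0)\leq 0$; combined, $V(0)=0$ and therefore $F_\alpha(0)=0$. For (c), fix $(u,v)\in\cI(\genX)\subseteq\genX$ and set $\omega=Au+Gv\in\Omega^\leq$. Then $(u,v)\in\genX^\leq(\omega)$, so
\[
V(Au+Gv)\geq (A^T\alpha-c)^T u+(G^T\alpha-d)^T v = \alpha^T(Au+Gv)-c^Tu-d^Tv,
\]
and substituting into the definition of $F_\alpha$ collapses the $\alpha^T(Au+Gv)$ terms and yields $F_\alpha(Au+Gv)\leq c^Tu+d^Tv$.

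I do not anticipate a real obstacle here: everything follows directly from the monoid closure of $\genX$, the linearity of the objective of $V$, and the well-definedness hypothesis. The only mild subtlety is ensuring the argument for $V(0)=0$ uses superadditivity rather than subadditivity (one must take care about the direction of the inequality, since $V$ is super- while $F_\alpha$ is sub-additive).
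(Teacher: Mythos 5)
Your proposal is correct and follows essentially the same route as the paper's proof: subadditivity via the monoid closure of $\genX^\leq(\cdot)$ under addition (equivalently, superadditivity of the inner supremum), the dual constraint via $(u,v)\in\genX^\leq(Au+Gv)$, and $F_\alpha(0)=0$ by combining the constraint at $(0,0)$ with the subadditivity inequality at $\omega_1=\omega_2=0$. The only cosmetic difference is that you phrase the last step in terms of $V$ being superadditive rather than $F_\alpha$ being subadditive, which is the same argument.
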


\begin{proof}
           Firstly,  we check  that {\em generator subadditive functions} are indeed subadditive functions. Let $\omega_1,\omega_2\in\Omega^\leq$. Observe that if $(x_1,y_1)\in \genX^\leq(\omega_1)$ and $(x_2,y_2)\in \genX^\leq(\omega_2)$, then $(x_1+x_2,y_1+y_2)\in \genX^\leq(\omega_1+\omega_2)$, and therefore $\omega_1+\omega_2\in\Omega^\leq$. Moreover, we obtain
\begin{align*}
    \sup \left \{ (A^T\alpha - c)^T x + (G^T\alpha - d)^T y\, :\, (x,y)\in \genX^\leq(\omega_1+\omega_2)\right \}&\geq \sup \left \{ (A^T\alpha - c)^T x + (G^T\alpha - d)^T y\, :\, (x,y)\in \genX^\leq(\omega_1)\right \}\\
    &+\sup \left \{ (A^T\alpha - c)^T x + (G^T\alpha - d)^T y\, :\, (x,y)\in \genX^\leq(\omega_2)\right \},
\end{align*}
\noindent and hence we conclude  $F_\alpha(\omega_1+\omega_2)\leq F_\alpha(\omega_1)+F_\alpha(\omega_2)$.
           
Notice that for any $(u,v) \in \genX$, we have 
\begin{equation}\label{dual_constraint_S}
        \begin{split}
         F_\alpha(A u+G v) & =   \alpha^T (A u+G v) - \sup \left \{ (A^T\alpha - c)^T x + (G^T\alpha - d)^T y \tq (x,y)\in \genX^\leq(A u+G v) \right \} \\
         & \le  
     \alpha^T Au -   (A^T\alpha - c)^T u +\alpha^T Gv -   (G^T\alpha - d)^T v  = c^T u + d^T v,
         \end{split}
    \end{equation}
   \noindent where the inequality follows from the fact that $(u,v)\in \genX^\leq(A u+G v)$. Since $\cI(\genX)\subseteq\genX$, this shows that $F_{\alpha}$ satisfies the first constraint in the dual~\eqref{eq:dual_genX_MIP}.
   Finally, since $\genX$ is a monoid, we have that $(u,v)=(0,0)\in\genX$ and thus from~\eqref{dual_constraint_S} it follows that $F_\alpha(0) \le 0$. Since any subadditive function satisfies the reverse inequality, we conclude that $F_\alpha(0)=0$.
\end{proof}

We now review some definitions related to cones.
\begin{dfn}[Regular cone, Conic inequality, Dual cone]
	We call a cone $K \subseteq \RR^m$  regular if it is closed, convex, pointed and full-dimensional.
The conic inequality with respect to $K$ is defined as $x \succeq_{K} y$ if and only if  $x - y \in K$.
The dual cone to a cone $K \subseteq \RR^m$ is defined as $K_* := \{y\in \mathbb{R}^m: x^Ty \ge 0, \ \textup{for all}\ x \in K\}$.
 
\end{dfn}

\begin{thm}[Strong duality via generator subadditive functions]\label{thm:strong+generator}
    Consider a feasible   MIP of the form~\eqref{eq:genX_MIP} and the subadditive dual of the form~\eqref{eq:dual_genX_MIP} with $\cD=\Omega^\leq$. Assume $F_\alpha:\Omega^\leq \to \rr$ is well-defined, and that $\conv(\genX^\leq(b))$ is conic representable, that is,
    $$\conv(\genX^\leq(b))=\{(x,y) \in\rr^{n_1}\times\rr^{n_2}\tq\exists\, w\in\rr^{n_3}\ \textup{s.t.}\ \Pi x  + \Phi y + \Psi w \succeq_{C} \pi\},$$
    for some matrices $\Pi, \Phi,\Psi$, a vector $\pi$ and a regular cone $C$. Assume further that the  (conic) dual of $\inf \{ c^T x + d^T y : \ Ax + Gy = b,\ \Pi x  + \Phi y + \Psi w \succeq_{C} \pi \}$ is a strong dual.
    Then, there exists $\alpha^*\in\rr^m$ such that the function $F_{\alpha^*}:\Omega^\leq \to \rr$ is well-defined and it is an optimal solution to the subadditive dual \eqref{eq:dual_genX_MIP}.
\end{thm}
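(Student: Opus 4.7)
The plan is to combine the Lagrangian viewpoint of Section~\ref{sec:main-Lagr} with the strong conic duality hypothesis. Feasibility of any well-defined $F_\alpha$ for the subadditive dual is already given by Proposition~\ref{prop:generator}, and by Proposition~\ref{prop:weakDual} every such $F_\alpha$ satisfies $F_\alpha(b)\leq z^*$, so the entire task is to produce a specific $\alpha^*\in\rr^m$ with $F_{\alpha^*}(b)=z^*$. As a preliminary step I would apply Theorem~\ref{thm:opt_convX_equals_X} with $\genXsubsetSleq=\genX^\leq(b)$—the hypothesis $\conv(\genX^\leq(b))\cap\genX=\genX^\leq(b)$ holds automatically because $\genX^\leq(b)$ is $\genX$ intersected with the convex set $\{(x,y)\tq Ax+Gy\leq b\}$—to rewrite
$$z^*=\inf\{c^Tx+d^Ty\tq Ax+Gy=b,\ (x,y)\in\conv(\genX^\leq(b))\},$$
and under the assumed conic representation this is the conic program $\inf\{c^Tx+d^Ty\tq Ax+Gy=b,\ \Pi x+\Phi y+\Psi w\succeq_C\pi\}$.

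Next I would write down and invoke its conic dual. Using multipliers $\alpha$ for $Ax+Gy=b$ and $\mu\in C_*$ for the conic inequality, the dual reads
$$\sup\{b^T\alpha+\pi^T\mu\tq A^T\alpha+\Pi^T\mu=c,\ G^T\alpha+\Phi^T\mu=d,\ \Psi^T\mu=0,\ \mu\in C_*\},$$
and the strong-duality hypothesis supplies an attaining pair $(\alpha^*,\mu^*)$ with $b^T\alpha^*+\pi^T\mu^*=z^*$. This $\alpha^*$ is our candidate; it remains to evaluate $F_{\alpha^*}(b)$ and check it equals $z^*$. Since a linear function has the same supremum over $\genX^\leq(b)$ as over $\conv(\genX^\leq(b))$, the supremum in~\eqref{eq:generator} at $\omega=b$ equals $\sup\{(A^T\alpha^*-c)^Tx+(G^T\alpha^*-d)^Ty\tq \Pi x+\Phi y+\Psi w\succeq_C \pi\}$. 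Substituting the dual feasibility identities—and using $\Psi^T\mu^*=0$—rewrites the objective as $-(\mu^*)^T(\Pi x+\Phi y+\Psi w)$; the relations $\mu^*\in C_*$ and $\Pi x+\Phi y+\Psi w-\pi\in C$ bound it above by $-\pi^T\mu^*$, and evaluating at a primal-optimal triple $(x^*,y^*,w^*)$ attains the bound by complementary slackness. Hence $F_{\alpha^*}(b)=b^T\alpha^*+\pi^T\mu^*=z^*$, and combined with Proposition~\ref{prop:generator} this yields optimality of $F_{\alpha^*}$ for~\eqref{eq:dual_genX_MIP}.

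The main obstacle I expect is the complementary-slackness step in the last paragraph: it implicitly assumes the primal conic program is attained, so that a triple with $(\mu^*)^T(\Pi x^*+\Phi y^*+\Psi w^*-\pi)=0$ actually exists. If the strong-duality hypothesis only guarantees zero duality gap without primal attainment, the clean attainment argument must be replaced by a limit along a primal-optimizing sequence, and this is precisely the place where the well-definedness of $F_{\alpha^*}$ (ruling out $-\infty$ in the supremum) is indispensable for the limiting manipulation to be legitimate. Beyond this one point, the argument is a transparent piece of conic-duality bookkeeping.
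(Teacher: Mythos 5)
Your proposal follows the same route as the paper: rewrite $z^*$ via Theorem~\ref{thm:opt_convX_equals_X} and the conic representation, take the conic dual, extract $\alpha^*$ from an optimal dual pair, and substitute the dual feasibility identities to control the supremum in~\eqref{eq:generator}. The one place you diverge is the final step, and there you make the argument harder than it needs to be. You try to show that the supremum \emph{equals} $b^T\alpha^*-z^*$ by exhibiting a primal-optimal triple at which complementary slackness holds, and you correctly flag that this presupposes primal attainment of the conic program, which the hypotheses do not guarantee. But attainment is never needed: the substitution $A^T\alpha^*+\Pi^T\mu^*=c$, $G^T\alpha^*+\Phi^T\mu^*=d$, $\Psi^T\mu^*=0$ together with $\mu^*\in C_*$ and $\Pi x+\Phi y+\Psi w-\pi\in C$ already gives the one-sided bound
\[
\sup\left\{(A^T\alpha^*-c)^Tx+(G^T\alpha^*-d)^Ty\tq (x,y)\in\genX^\leq(b)\right\}\le -\pi^T\mu^*=b^T\alpha^*-z^*,
\]
hence $F_{\alpha^*}(b)\ge z^*$. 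The reverse inequality $F_{\alpha^*}(b)\le z^*$ is exactly the weak-duality bound~\eqref{eq:GenSadWeakDuality} that you yourself invoke in your opening paragraph, so equality follows with no limiting sequence and no attainment assumption. This is precisely how the paper closes the argument. So the "main obstacle" you identify is not an obstacle at all once you remember that you only need a lower bound on $F_{\alpha^*}(b)$ from the conic dual side; replace your last paragraph accordingly and the proof is complete.
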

\begin{proof} Let $\omega\in\Omega$. By Theorem~\ref{thm:opt_convX_equals_X}, we obtain that
    \begin{align*}
    z^*(\omega):=&\inf\{c^Tx+d^Ty\tq Ax+Gy= b,\ (x,y) \in \genX^\leq(\omega)\} \\
    =&\inf\{c^Tx+d^Ty\tq Ax+Gy= b,\ (x,y)\in \conv(\genX^\leq(\omega))\},
\end{align*}
\noindent and by assumption we conclude that
    \begin{equation}\label{eq:IP with valid conic repr}
    \begin{aligned}
    z^*(b)=   \inf  \{  c^Tx + d^T y  : 
        A x + G y = b , \ \Pi x  + \Phi y + \Psi w \succeq_{C} \pi \}.
    \end{aligned}
    \end{equation}
    
    Consider the conic dual problem associated to \eqref{eq:IP with valid conic repr}:
    \begin{equation*} 
    \begin{aligned}
    \sup \{ b^T \alpha + \pi^T \gamma  : 
       A^T \alpha +{\Pi}^T \gamma = c  , \ 
        G^T \alpha + {\Phi}^T \gamma = d , \ 
        {\Psi}^T \gamma = 0 ,  \
        \gamma \in C_* \}.
    \end{aligned}
    \end{equation*}    
Since strong duality holds,   for any optimal pair of dual variables $(\alpha^*, \gamma^*)$, we have $z^*= b^T \alpha^* + \pi^T \gamma^*$. Therefore, for $(x,y)\in \genX^\leq(b)$ and $w$ such that ${\Pi} x  +  {\Phi} y + \Psi w\succeq_{C} \pi$, we obtain
    \begin{equation*}
    \begin{split}
    (A^T\alpha^* - c)^T x  + (G^T\alpha^* - d)^T y 
     = & \left(   -{\Pi}^T \gamma^*  \right)^T x  + 
     \left(-{\Phi}^T \gamma^*  \right)^T y
     +   \left(  -{\Psi}^T \gamma^*  \right )^T w \\
     = &   -\left( {\Pi} x  +  {\Phi} y + \Psi w \right)^T \gamma^* \\
    \le & -\pi^T  \gamma^* =  b^T \alpha^* - z^*,
    \end{split}
    \end{equation*}
    which implies
    $$
    \sup \left \{ (A^T\alpha^* - c)^T x + (G^T\alpha^* - d)^T y \tq  \ (x,y) \in \genX^\leq(b)  \right \} \le  b^T \alpha^* - z^*.
    $$
  As by definition  $F_{\alpha^*}(\omega)= {\alpha^*}^T \omega - \sup \left \{ (A^T\alpha^* - c)^T x + (G^T\alpha^* - d)^T y\, :\, (x,y)\in \genX^\leq(\omega)\right \}$, the inequality above shows that $F_{\alpha^*}(b)\ge z^*$. Hence, by \eqref{eq:GenSadWeakDuality}, we conclude that $F_{\alpha^*}(b)=z^*$. 
  Finally, since by assumption $F_{\alpha^*}:\Omega^\leq \to \rr$ is well-defined, by Proposition~\ref{prop:generator}, we obtain that $F_{\alpha^*}$ is feasible for the dual~\eqref{eq:dual_genX_MIP} with $\cD=\Omega^\leq$, and thus this generator subadditive function is an optimal solution to this dual problem. 
\end{proof}

\section{Duality for equality-constrained conic mixed-integer programs}
\label{sec:duality-conic-MIP}

\subsection{Strong duality}
We will show that~\eqref{eq:dual_genX_MIP} is a strong dual  when $\genX=K\cap(\mathbb{Z}^{n_1} \times \mathbb{R}^{n_2})$, where $K$ is a regular cone, and $\cD=\rr^m$.
We first need a definition.

\begin{dfn}[Nondecreasing function]
A function $f:\cD \to \RR$ is nondecreasing with respect to a regular cone $K \subseteq \RR^m$  if for $u,v\in\cD$ we have $u \succeq_K v \Rightarrow f(u) \ge f(v)$.
\end{dfn}
\begin{prop}[Strong duality]\label{prop:strongDual}
    Consider a feasible conic MIP of the form~\eqref{eq:genX_MIP} and the subadditive dual of the form~\eqref{eq:dual_genX_MIP}. Assume that the conic dual of the continuous relaxation of~\eqref{eq:genX_MIP} is feasible.  Then, there exists a dual feasible function $f$ such that $f(b)=\rho^* = z^*$.
\end{prop}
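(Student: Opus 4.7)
The plan is to construct an optimal feasible dual function by first defining the primal value function on its natural domain $\Omega$, verifying it is real-valued via the conic dual feasibility hypothesis, and then subadditively extending it to all of $\RR^m$.

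To start, I would define $\nu\colon\Omega\to\RR$ by $\nu(\omega) = \inf\{c^Tx + d^Ty : Ax+Gy=\omega,\,(x,y)\in\genX\}$. The hypothesis provides some $\lambda^*\in\RR^m$ with $(c-A^T\lambda^*,\,d-G^T\lambda^*)\in K_*$; since $\genX\subseteq K$, this forces $(c-A^T\lambda^*)^Tx + (d-G^T\lambda^*)^Ty \ge 0$ for every $(x,y)\in\genX$, hence $\nu(\omega)\ge(\lambda^*)^T\omega>-\infty$ on $\Omega$. Combined with $(0,0)\in\genX$, one has $\nu(0)=0$ and, by primal feasibility, $\nu(b)=z^*$. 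The monoid structure of $\genX$ then yields subadditivity of $\nu$ on $\Omega$ and the inequality $\nu(Au+Gv)\le c^Tu+d^Tv$ for all $(u,v)\in\genX$, and therefore for every $(u,v)\in\cI(\genX)$.

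The main obstacle is extending $\nu$ from $\Omega$ to a subadditive real-valued function $f$ on $\RR^m$ preserving $f(b)=z^*$ and the dual constraints, since $\Omega$ may be a proper sub-monoid of $\RR^m$ and a naive extension destroys either feasibility or the value at $b$. The route I would take is to reformulate the equality $Ax+Gy=b$ as the two-sided polyhedral conic inequality $\tilde A(x,y)\succeq_{\RR_+^{2m}}\tilde b$ with $\tilde A=\bigl(\begin{smallmatrix}A & G\\-A & -G\end{smallmatrix}\bigr)$ and $\tilde b=(b,-b)$. Under this reformulation the primal optimal value is unchanged, and dual feasibility of the original conic relaxation transfers directly to the reformulated relaxation through the splitting $\lambda=\mu_1-\mu_2$ with $\mu_1,\mu_2\in\RR_+^m$. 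Consequently, the strong subadditive duality theorem for conic inequality-constrained MIPs developed in~\cite{kocuk2019subadditive,Moran} applies and furnishes a subadditive, $\RR_+^{2m}$-nondecreasing function $\hat f\colon\RR^{2m}\to\RR$ with $\hat f(0)=0$, $\hat f(\tilde A(u,v))\le c^Tu+d^Tv$ for every $(u,v)\in\cI(\genX)$, and $\hat f(\tilde b)=z^*$.

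Finally, I would set $f(\omega):=\hat f(\omega,-\omega)$. Subadditivity is preserved under this linear pullback because $f(\omega_1+\omega_2)=\hat f((\omega_1,-\omega_1)+(\omega_2,-\omega_2))\le\hat f(\omega_1,-\omega_1)+\hat f(\omega_2,-\omega_2)=f(\omega_1)+f(\omega_2)$, and clearly $f(0)=0$ and $f(Au+Gv)=\hat f(\tilde A(u,v))\le c^Tu+d^Tv$ for $(u,v)\in\cI(\genX)$, so $f$ is feasible for~\eqref{eq:dual_genX_MIP}. Since $f(b)=\hat f(\tilde b)=z^*$, weak duality (Proposition~\ref{prop:weakDual}) yields $\rho^*=z^*=f(b)$, as desired.
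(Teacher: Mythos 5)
Your construction is essentially the paper's: rewrite the equality constraint as the two-sided inequality system with right-hand side $(\omega,-\omega)$, invoke the known strong subadditive duality theorem for inequality-constrained conic MIPs from \cite{kocuk2019subadditive,Moran} on the reformulation, and pull the resulting dual function back along $\omega\mapsto(\omega,-\omega)$; the opening paragraph on the value function $\nu$ is correct but plays no role in the final construction. One technical point to tighten: the theorem you cite applies to conic MIPs whose variables are restricted only by integrality, so the membership $(x,y)\in K$ must itself be lifted into the conic inequality system rather than left as a domain restriction on the variables. The paper does exactly this, which is why its dual function is $H:\rr^m\times\rr^m\times\rr^n\to\rr$, nondecreasing with respect to $\rr_+^m\times\rr_+^m\times K$, and $f(\omega)=H(\omega,-\omega,0)$; your $\hat f$ on $\RR^{2m}$ skips that block. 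Relatedly, the inequality $\hat f(\tilde A(u,v))\le c^Tu+d^Tv$ for all $(u,v)\in\cI(\genX)$ is not the literal form of the dual constraints delivered by that theorem (those are indexed by the columns of the constraint matrix); as in the paper, it is obtained by applying weak duality to the reformulated primal--dual pair with right-hand side $\omega=Au+Gv$ and noting that $(u,v)$ is primal feasible there. Both are easy repairs and do not change the substance of your argument.
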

\begin{proof}
Let us consider the equivalent conic MIP
    \begin{equation}\label{eq:generic-equivalent}
    \begin{aligned}
    \hat z(\omega) := \inf  &\hspace{0.5em}  c^Tx + d^Ty\\
    \mathrm{s.t.}   
    &\hspace{0.5em} A x + Gy \ge \omega \\
    &               -A x - Gy \ge -\omega \\
    & \hspace{0.5em}  (x,y) \ge_K 0 \\
    & \hspace{0.5em}  x \in  \mathbb{Z}^{n_1}, y\in \mathbb{R}^{n_2},
    \end{aligned}
    \end{equation}
and its subadditive dual (see,~\cite{Moran, kocuk2019subadditive})
    \begin{equation}\label{eq:generic-equivalent-sad}
    \begin{aligned}
    \hat\rho(\omega) := \sup  &\hspace{0.5em}  H(\omega,-\omega,0) \\
      \mathrm{s.t.}   &\hspace{0.5em} H(a^j, -a^j, e^j) = -H(-a^j, a^j, -e^j)  = c_j\quad \text{for all}\ j=1,\dots,{n_1} \\
      &\hspace{0.5em} \bar H(g^j, -g^j, e^{n_1+j}) = -\bar H(-g^j, g^j, -e^{n_1+j})  = d_j \quad \text{for all}\ j=1,\dots,{n_2} \\
      & \hspace{0.5em}  H(0,0,0) = 0\\ 
      & \hspace{0.5em}  H:\rr^m\times\rr^m\times\rr^n\rightarrow \rr \text{ is subadditive and} \text{          nondecreasing w.r.t. } \rr_+^m\times\rr_+^m\times K.
    \end{aligned}
    \end{equation}
    Here, $e^i$ denotes the standard $i$th unit vector in $\rr^{n_1+n_2}$. 
For $\omega=b$, since the conic dual of the continuous relaxation of~\eqref{eq:genX_MIP} is feasible, and conic MIPs~\eqref{eq:genX_MIP} and \eqref{eq:generic-equivalent} are equivalent, the conic dual of the continuous relaxation of~\eqref{eq:generic-equivalent} is feasible. Then, due to~\cite{kocuk2019subadditive}, problem~\eqref{eq:generic-equivalent-sad} is a strong dual to problem~\eqref{eq:generic-equivalent}. Hence, there exists a feasible function ${\hat H}$ such that $\hat H(b,-b,0)=\hat \rho(b)=\hat z(b)$.  

Define the function $f(\omega) := \hat H(\omega,-\omega,0)$ for $\omega\in \rr^m$. We claim that $f$ is an optimal feasible solution to the subadditive dual~\eqref{eq:dual_genX_MIP}. The fact that $f$ is subadditive follows from the fact that $f$ is the composition of a subadditive function and a linear function.
Now, we show that  $f$ satisfies the  constraint in the dual~\eqref{eq:dual_genX_MIP}. Let $(u,v)  \in \cI(K\cap(\mathbb{Z}^{n_1} \times \mathbb{R}^{n_2}))$. Then, by considering $\omega=Au+Gv$ in  the primal~\eqref{eq:generic-equivalent} and the dual~\eqref{eq:generic-equivalent-sad}, by weak duality, we obtain $f(Au+Gv)\leq c^T u+d^Tv$. On the other hand, we have that $f(0) = \hat H(0,0,0) = 0$. So $f$ satisfies all constraints of the dual~\eqref{eq:dual_genX_MIP}. Finally, by construction $\hat z(b)=z^*$, and therefore, we obtain that $f(b) = \hat H(b,-b,0)=\hat z(b)=z^*$ and by Proposition~\ref{prop:weakDual}, we conclude that $f(b) =\rho^*=z^*$ as by weak duality, $z^*$ is an upper bound for $\rho^*$.
\end{proof}

The following is an immediate consequence of the strong duality result in Proposition~\ref{prop:strongDual}.
\begin{cor}[Complementary slackness]\label{cor:complSlack}
    Consider a feasible solution $(x,y)$ of conic MIP~\eqref{eq:genX_MIP} and a feasible dual solution $f$ of subadditive dual~\eqref{eq:dual_genX_MIP}.   Then, $ (x,y)$ and $ f$ are optimal solutions of the respective problems if and only if $f(Ax+Gy) = c^T x + d^T y$.
\end{cor}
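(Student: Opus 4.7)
The plan is to derive both implications directly from the weak duality inequality of Proposition~\ref{prop:weakDual} together with the strong duality statement of Proposition~\ref{prop:strongDual}; no new machinery is required. The key observation throughout is that for any primal feasible $(x,y)$ we have $Ax+Gy=b$, so evaluating $f$ at $Ax+Gy$ is the same as evaluating $f$ at $b$.

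For the forward direction, I will assume that $(x,y)$ is optimal for~\eqref{eq:genX_MIP} and $f$ is optimal for~\eqref{eq:dual_genX_MIP}. By Proposition~\ref{prop:strongDual} strong duality holds, giving $f(b)=\rho^{*}=z^{*}=c^{T}x+d^{T}y$. Substituting $b=Ax+Gy$ on the left then yields the identity $f(Ax+Gy)=c^{T}x+d^{T}y$.

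For the backward direction, I will start from the hypothesis $f(Ax+Gy)=c^{T}x+d^{T}y$ and use primal feasibility to rewrite this as $f(b)=c^{T}x+d^{T}y$. Now for any primal feasible $(x',y')$, Proposition~\ref{prop:weakDual} applied to the feasible dual $f$ gives
\[
c^{T}x+d^{T}y \;=\; f(b) \;\le\; c^{T}x'+d^{T}y',
\]
so $(x,y)$ attains the primal infimum and is optimal. Symmetrically, for any dual feasible $f'$, weak duality applied to the primal feasible $(x,y)$ gives $f'(b)\le c^{T}x+d^{T}y=f(b)$, so $f$ attains the dual supremum and is optimal.

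I do not anticipate any real obstacle: the statement is the standard complementary-slackness-style consequence of having matching weak and strong duality theorems, and the only subtlety is remembering to invoke primal feasibility to turn $f(Ax+Gy)$ into $f(b)$. The proof should fit in a few lines.
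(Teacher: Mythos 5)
Your proof is correct and follows essentially the same route as the paper's (the paper's own proof, though suppressed in the final version, argues exactly this way): substitute $Ax+Gy=b$ by primal feasibility, use Proposition~\ref{prop:strongDual} for the forward direction, and weak duality for the converse. Your backward direction is in fact slightly more explicit than the paper's, since you spell out separately why $(x,y)$ attains the primal infimum and why $f$ attains the dual supremum.
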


\subsection{Conic MIPs where integer and continuous variables come from different cones}

In this section, we specialize our results 
for $\genX = (K_1 \cap \zz^{n_1}) \times K_2$, where  $K_1 \subseteq \rr^{n_1}$ and $K_2 \subseteq \rr^{n_2}$ are regular cones.
%
%
For this case, we propose the following subadditive dual problem of~\eqref{eq:genX_MIP}:

\begin{equation}\label{eq:dualGeneric-product}
    \begin{aligned}
    \rho^* := \sup  &\hspace{0.5em}  f(b) \\
      \mathrm{s.t.}   &\hspace{0.5em} f(A u) \le c^T u & u & \in \cI( K_1 \cap \zz^{n_1} )\\
      &\hspace{0.5em} \bar f(G v) \le d^T v & v & \in \cI ( K_2) \\
      & \hspace{0.5em}  f(0) = 0, \ f \in \mathcal{F}_\cD.
    \end{aligned}
    \end{equation}

Similar to the case of the dual problem~\eqref{eq:dual_genX_MIP}, the choice of generating set does not affect the feasible region of the dual problem~\eqref{eq:dualGeneric-product}. In other words, a result analogous to Lemma~\ref{lem:IntGenSet-indifferent} is true, but we do not state it in this paper.

When $\cD=\rr^m$, the subadditive dual~\eqref{eq:dualGeneric-product} generalizes results from the linear MIP literature \cite{johnson1974group, jeroslow1978cutting, Guzelsoy,Moran} since we can choose $\cI( K_1 \cap \zz^{n_1} )=\{e^1,\ldots,e^{n_1}\}$ and $\cI ( K_2)=\{e^{n_1+1},\ldots,e^{n_1+n_2}\}$, and thus we recover the constraints from the linear MIP dual \eqref{eq:subadditiveDualLinear}.

We will show in Proposition~\ref{prop:Dual equivalence} below that the dual~\eqref{eq:dual_genX_MIP} and the dual~\eqref{eq:dualGeneric-product} have the same feasible solutions, and therefore weak duality (Proposition~\ref{prop:weakDual}), strong duality (Propositions~\ref{prop:strongDual}) and complementary slackness (Corollary~\ref{cor:complSlack}) also hold. For this purpose, we need a result from the literature and a lemma.

\begin{thm}[\cite{jeroslow1978cutting, johnson1974group, Nemhauser}]\label{g_bar_ineq}
If $g:\rr^m\mapsto \rr$ is a subadditive function such that $g(0)=0$, then for all $ \omega\in\rr^m$ with $\overline{g}(\omega)<\infty$ and for all $\lambda\geq0$, we have that $g(\lambda \omega)\leq\lambda\overline{g}(\omega)$.
\end{thm}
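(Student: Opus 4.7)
The plan is to exploit subadditivity to bound $g(\lambda \omega)$ in terms of $g$ evaluated at arbitrarily small positive dilations along the ray $\rr_+ \omega$, and then pass to the limit, matching up with the definition of $\bar g$. The case $\lambda = 0$ is immediate from $g(0) = 0$, so from now on I assume $\lambda > 0$.

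The first step is to iterate subadditivity. For any positive integer $n$, writing $\lambda \omega$ as the sum of $n$ copies of $(\lambda/n)\omega$ and applying the subadditive inequality $n-1$ times yields
\begin{equation*}
g(\lambda \omega) \;\le\; n\, g\!\left(\tfrac{\lambda}{n}\omega\right).
\end{equation*}
Dividing both sides by $\lambda > 0$ and setting $\delta_n := \lambda/n > 0$, this is equivalent to
\begin{equation*}
\frac{g(\lambda \omega)}{\lambda} \;\le\; \frac{g(\delta_n \omega)}{\delta_n} \qquad \text{for every } n \in \zz_+.
\end{equation*}

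The second step is to pass to the limit as $n \to \infty$. Note that $\delta_n \to 0^+$, while the left-hand side is a fixed real number independent of $n$. Taking limit superiors along the sequence $(\delta_n)$ and invoking the standard fact that the $\limsup$ along any particular sequence tending to $0^+$ is dominated by $\limsup_{\delta \to 0^+}$ over \emph{all} positive sequences tending to zero, I obtain
\begin{equation*}
\frac{g(\lambda \omega)}{\lambda} \;\le\; \limsup_{n\to\infty} \frac{g(\delta_n \omega)}{\delta_n} \;\le\; \limsup_{\delta\to 0^+} \frac{g(\delta \omega)}{\delta} \;=\; \bar g(\omega).
\end{equation*}
Multiplying through by $\lambda$ gives the desired inequality $g(\lambda \omega) \le \lambda \bar g(\omega)$.

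The argument is essentially mechanical, and I do not expect any genuine obstacle. The only minor technical point is the comparison between the limit superior along the specific sequence $\delta_n = \lambda/n$ and the full $\limsup_{\delta\to 0^+}$ appearing in the definition of $\bar g$; this is a standard property of limit superiors along subnets of $(0,\infty)$. The hypothesis $\bar g(\omega) < \infty$ guarantees that the right-hand side of the conclusion is meaningful, and since $g$ is real-valued on $\rr^m$, the chain of inequalities in fact also forces $\bar g(\omega) > -\infty$, so no pathological extended-value cases need to be treated separately.
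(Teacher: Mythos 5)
Your proof is correct: iterating subadditivity to get $g(\lambda\omega)\le n\,g((\lambda/n)\omega)$ and passing to the limit $\delta_n=\lambda/n\to 0^+$ inside the definition of $\bar g$ is exactly the standard argument for this fact. The paper itself gives no proof (it only cites \cite{jeroslow1978cutting, johnson1974group, Nemhauser}), and your argument matches the classical one in those references, including the correct handling of the trivial case $\lambda=0$.
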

\begin{lem} \label{lem:f iff f-bar}
Let $K \subseteq \rr^n$ be a cone, $g:\rr^m\to \rr$ be a subadditive function such that $g(0)=0$,  $N\in\rr^{m \times n}$ and $e\in\rr^n$. Then, $g(N\upsilon) \le e^T \upsilon$ for all $\upsilon\in K$ if and only if $\bar g(N\upsilon) \le e^T \upsilon$ for all $\upsilon\in K$.
\end{lem}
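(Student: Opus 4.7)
The plan is to prove each direction by short, direct arguments, exploiting the fact that $K$ is a cone (so $\lambda\upsilon\in K$ for every $\upsilon\in K$ and $\lambda\geq 0$) and invoking Theorem~\ref{g_bar_ineq} in one of the two directions.

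For the forward implication ($\Rightarrow$), I would fix $\upsilon\in K$ and, for every $\delta>0$, use that $\delta\upsilon\in K$ by the cone property. Then the hypothesis gives $g(N(\delta\upsilon))=g(\delta N\upsilon)\le e^T(\delta\upsilon)=\delta\, e^T\upsilon$. Dividing by $\delta>0$ and taking $\limsup_{\delta\to 0^+}$ of both sides yields $\bar g(N\upsilon)\le e^T\upsilon$, which is exactly the desired inequality.

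For the reverse implication ($\Leftarrow$), I would fix $\upsilon\in K$ and set $\omega:=N\upsilon$. The hypothesis says $\bar g(\omega)\le e^T\upsilon<\infty$, so the finiteness assumption required by Theorem~\ref{g_bar_ineq} is satisfied. Applying that theorem with $\lambda=1$ gives $g(\omega)\le \bar g(\omega)$, hence $g(N\upsilon)\le \bar g(N\upsilon)\le e^T\upsilon$, as needed.

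There is essentially no main obstacle; the only subtle point is confirming the finiteness condition $\bar g(\omega)<\infty$ needed to apply Theorem~\ref{g_bar_ineq}, but it is automatically supplied by the hypothesis itself in the backward direction. Both directions require $g(0)=0$ (used implicitly through Theorem~\ref{g_bar_ineq}, and also justifying $\bar g(0)=0$ in the limsup computation when $\upsilon=0$), so no additional assumption is needed beyond what the lemma states.
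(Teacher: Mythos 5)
Your proof is correct and follows essentially the same route as the paper: the forward direction uses the cone property $\delta\upsilon\in K$ to bound the difference quotient before taking the $\limsup$, and the reverse direction applies Theorem~\ref{g_bar_ineq} with $\lambda=1$ to get $g(N\upsilon)\le\bar g(N\upsilon)$. Your explicit check of the finiteness hypothesis $\bar g(N\upsilon)<\infty$ is a nice touch that the paper leaves implicit.
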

\begin{proof}
($\Rightarrow$): Let $\upsilon\in K$. Due to the definition of $\bar g$, we have
\(
\bar g(N\upsilon) 
= \lim\sup_{\delta\to0^+}\frac{ g (\delta N \upsilon)}{\delta} 
\le  \lim\sup_{\delta\to0^+}\frac{ \delta e^T \upsilon}{\delta} =  e^T \upsilon,
\)
where the inequality follows from $g ( N (\delta\upsilon)) \le  e^T (\delta\upsilon)$ since $\delta \upsilon \in K$.

($\Leftarrow$): Let $\upsilon\in K$. Then, due to Theorem~\ref{g_bar_ineq} with $\omega=N\upsilon$ and $\lambda=1$, we obtain $g(N\upsilon)  \le \bar g(N\upsilon)  \le e^T \upsilon$.
\end{proof}

\begin{prop}
\label{prop:Dual equivalence}
Let $\genX = (K_1 \cap \zz^{n_1}) \times K_2$. Suppose that $\cI( K_1 \cap \zz^{n_1} )$ and $ \cI(K_2)$ are generating sets of the sets $ K_1 \cap \zz^{n_1}$ and $K_2$, respectively. Then,
\(
 ( \cI( K_1 \cap \zz^{n_1} ) \times \{0\}^{n_2} ) \cup (\{0\}^{n_1} \times \cI(K_2) )
\) is a generating set of $\genX$. Moreover, dual problems~\eqref{eq:dual_genX_MIP} and \eqref{eq:dualGeneric-product} are equivalent.
\end{prop}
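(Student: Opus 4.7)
My plan is to prove the two claims in sequence, leveraging Lemma~\ref{lem:IntGenSet-indifferent} (to swap generating sets of $\genX$ at no cost to the dual feasible region) and Lemma~\ref{lem:f iff f-bar} (to exchange $f$ for $\bar f$ on a cone).

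For the generating-set claim, I would take any $(u,v)\in \genX=(K_1\cap\zz^{n_1})\times K_2$. By assumption there exist $u_i \in \cI(K_1\cap\zz^{n_1})$ and $v_j\in \cI(K_2)$ with coefficients $\lambda_i,\mu_j\in\zz_+$ such that $u=\sum_{i=1}^p \lambda_i u_i$ and $v=\sum_{j=1}^q \mu_j v_j$. Then
\begin{equation*}
(u,v)=\sum_{i=1}^p\lambda_i(u_i,0)+\sum_{j=1}^q\mu_j(0,v_j)
\end{equation*}
exhibits $(u,v)$ as a nonnegative integer combination of elements of the set $\mathcal{J}:=(\cI(K_1\cap\zz^{n_1})\times\{0\}^{n_2})\cup(\{0\}^{n_1}\times\cI(K_2))$, so $\mathcal{J}$ generates $\genX$.

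For the equivalence of the duals, I would show both have exactly the same feasible functions. Fix $f\in\mathcal{F}_\cD$ with $f(0)=0$. Applying Lemma~\ref{lem:IntGenSet-indifferent} to the generating set $\mathcal{J}$ constructed above, the single constraint of~\eqref{eq:dual_genX_MIP} splits into the pair: (a) $f(Au)\le c^T u$ for all $u\in \cI(K_1\cap\zz^{n_1})$; and (b) $f(Gv)\le d^T v$ for all $v\in \cI(K_2)$. Statement (a) coincides with the first constraint of~\eqref{eq:dualGeneric-product}, so the whole task reduces to showing that (b) is equivalent to $\bar f(Gv)\le d^T v$ for all $v\in \cI(K_2)$. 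I would chain this in three steps: (1) mimicking the proof of Lemma~\ref{lem:IntGenSet-indifferent}, the inequality in (b) extends from $\cI(K_2)$ to all of $K_2$; (2) Lemma~\ref{lem:f iff f-bar} with $N=G$, $e=d$, $K=K_2$ converts $f(Gv)\le d^Tv$ on $K_2$ into $\bar f(Gv)\le d^Tv$ on $K_2$; and (3) restricting to $\cI(K_2)$ yields the second constraint of~\eqref{eq:dualGeneric-product}. The reverse direction retraces the same chain in the opposite order.

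The only mildly subtle point is the reverse extension in step~(1) applied to $\bar f$, which the converse direction requires: if $\bar f(Gv)\le d^T v$ for all $v\in \cI(K_2)$, one must deduce the same for all $v\in K_2$. This rests on $\bar f$ inheriting subadditivity from $f$ by passing to the $\limsup$ (with $\bar f(0)=0$) and being positively homogeneous on $[0,\infty)$, so that for $v=\sum_j\mu_j v_j$ one gets $\bar f(Gv)\le \sum_j\mu_j\bar f(Gv_j)\le \sum_j\mu_j d^Tv_j=d^Tv$ exactly as in the proof of Lemma~\ref{lem:IntGenSet-indifferent}. Beyond this bookkeeping, no new technique is required.
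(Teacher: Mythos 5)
Your proof is correct and follows essentially the same route as the paper: the same product decomposition of the generating set, Lemma~\ref{lem:IntGenSet-indifferent} to make the choice of generating set immaterial, and Lemma~\ref{lem:f iff f-bar} to pass between $f$ and $\bar f$. The only differences are cosmetic: your explicit extension of $f(Gv)\le d^Tv$ from $\cI(K_2)$ to all of $K_2$ before invoking the forward direction of Lemma~\ref{lem:f iff f-bar} makes precise a step the paper leaves implicit, while your reverse direction detours through extending $\bar f(Gv)\le d^Tv$ to all of $K_2$ even though the pointwise inequality $f(Gv)\le\bar f(Gv)$ from Theorem~\ref{g_bar_ineq}, applied at each $v\in\cI(K_2)$ as in the paper, already suffices.
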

\begin{proof}
    Let $(x,y) \in \genX$. 
 Since $x\in K_1\cap \zz^{n_1}$ and $ y \in K_2$,  we have that there exist $u_1,\ldots,u_p\in \cI(K_1\cap \zz^{n_1} )$ and  $\lambda_1,\ldots,\lambda_p  \in \zz_+$ such that $x=\sum_{i=1}^p \lambda_i u_i$, and $v_1,\ldots,v_q\in \cI (K_2)$ and  $\mu_1,\ldots,\mu_q\in \zz_+$ such that $y=\sum_{j=1}^q \mu_j v_j$. 
 Then, we obtain 
 $(x,y) =  \sum_{i=1}^p \lambda_i (u_i,0) + \sum_{j=1}^q \mu_j (0,v_j)$, which proves the first assertion.

To prove the second assertion, it suffices to show that for $f\in \mathcal{F}_\cD$ with $f(0) = 0$  we have that 
\begin{equation} \label{dual1}
    \begin{aligned}
      &\hspace{0.5em} f(A u+Gv  ) \le c^T u+d^Tv & \textup{for all}\ (u,v)& \in \cI( (K_1 \cap \zz^{n_1}) \times K_2 ),\\
    \end{aligned}
    \end{equation}
 is equivalent to
\begin{equation} \label{dual2}
    \begin{aligned}
      &\hspace{0.5em} f(A u  ) \le c^T u & \textup{for all}\  u& \in \cI(K_1 \cap \zz^{n_1}  )\\
    &\hspace{0.5em} \bar f( G v) \le  d^Tv& \textup{for all}\  v& \in \cI( K_2 ),
    \end{aligned}
    \end{equation}

We will use the fact that by Lemma~\ref{lem:IntGenSet-indifferent}, the constraints of the dual~\eqref{eq:dual_genX_MIP} do not depend on the integral generating set used. Since a generating set of $\genX=(
K_1 \cap \zz^{n_1}) \times K_2$ is \(
 ( \cI( K_1 \cap \zz^{n_1} ) \times \{0\}^{n_2} ) \cup (\{0\}^{n_1} \times \cI(K_2) )
\), then if $f$ satisfies constraint~\eqref{dual1},  we obtain that $f(A u  ) \le c^T u$ for $(u,0)\in\cI( K_1 \cap \zz^{n_1} )\times \{0\}^{n_2}$ and $f( G v) \le  d^Tv$ for $(0,v)\in\{0\}^{n_1} \times \cI(K_2)$. Hence, by Lemma~\ref{lem:f iff f-bar}, we obtain that constraints~\eqref{dual2} are satisfied by $f$.
For the reverse direction, we assume that  $f$ satisfies~\eqref{dual2}. We will show that constraints~\eqref{dual1} are satisfied for $(u,v)\in 
 ( \cI( K_1 \cap \zz^{n_1} ) \times \{0\}^{n_2} ) \cup (\{0\}^{n_1} \times \cI(K_2))$. By the second constraint in~\eqref{dual2} and Lemma~\ref{lem:f iff f-bar}, we obtain that $\bar f( G v) \le  d^Tv$ implies that $f( G v) \le  d^Tv$. Therefore, by using subadditivity of $f$, we conclude that $f(Au+Gv)\leq f(Au)+f(Gv)\leq  c^T u+d^Tv$, as desired.
\end{proof}

\subsection{Conic MIPs with block structure}

In this section, we specialize our results 
for a conic MIP with block structure given as 
\begin{equation}\label{eq:genX_MIP-block}
\begin{aligned}
z^* :=   \inf  &\hspace{0.5em}   \sum_{\ell=1}^L \left [  {c^\ell}^T x^\ell + {d^\ell}^T y^\ell \right] \\
  \mathrm{s.t.}   &\hspace{0.5em}\sum_{\ell=1}^L  \left [ A^\ell x^\ell + G^\ell y^\ell  \right] = b \\
  & \hspace{0.5em}  (x^\ell,y^\ell)  \in  K^\ell \cap (\zz^{n_1^\ell} \times \rr^{n_2^\ell} ) \qquad \ell=1,\dots,L,
  \end{aligned}
\end{equation}
where $K^\ell \subseteq \rr^{n_1^\ell+n_2^\ell}$ is a regular cone and  {$c^\ell \in \RR^{n_1^\ell}$, $d^\ell \in \RR^{n_2^\ell}$}, $A^\ell \in \RR^{m \times n_1^\ell}$, $G^\ell \in \RR^{m \times n_2^\ell}$, $b \in \RR^{m}$,  for $\ell=1,\dots,L$.
Clearly,  conic MIP~\eqref{eq:genX_MIP-block} is {an instance of}   MIP~\eqref{eq:genX_MIP} with 
{$\genX = \bigtimes_{\ell=1}^L \big [ K^\ell \cap (\zz^{n_1^\ell} \times \rr^{n_2^\ell} ) \big ]$,} hence, Propositions~\ref{prop:weakDual} and~\ref{prop:strongDual}, and Corollary~\ref{cor:complSlack}  hold true. However, in this case, we can also generalize Proposition~\ref{prop:generatorLinear-withE} as follows:
\begin{prop}\label{prop:generatorLinear-withE-block} 
        Consider a feasible conic MIP  of the form \eqref{eq:genX_MIP-block}.  Let $a^\ell_i$ and $g^\ell_i$ respectively denote the $i$-th row of matrices $A^\ell$ and $G^\ell$.
        Define 
        $$E' := \left\{ \ell : 
        \begin{bmatrix}
            c^\ell \\ d^\ell
        \end{bmatrix}
        - 
        \begin{bmatrix}
            A^\ell \\ G^\ell
        \end{bmatrix}^T
        \alpha   \in {K^\ell_*} 
        \text{ and } 
        \begin{bmatrix}
            a_i^\ell \\ g_i^\ell
        \end{bmatrix}
        \in K_*^\ell \ \  \text{ for all } i=1,\dots,m \right\}
        \text{ and } E := \{1,\dots,L\} \setminus E',$$ for some $\alpha\in\rr^m$. Then, 
    \begin{equation} \label{eq:generatorFuncLinear-block}
    \begin{split}
    F_\alpha(\omega) = \alpha^T \omega - \max \bigg \{ &
    \sum_{l \in E} 
    \left [ ( {A^\ell}^T \alpha - c^\ell )^T x^\ell + ( {G^\ell}^T \alpha  - d^\ell)^T y^\ell  \right ]
    : \\
    & \sum_{\ell \in E}  \left [ A^\ell x^\ell + G^\ell y^\ell  \right] \le \omega,
    \quad (x^\ell,y^\ell)  \in  K^\ell \cap (\zz^{n_1^\ell} \times \rr^{n_2^\ell} ), \ \ell=1,\dots,L  \bigg \}.
    \end{split}
    \end{equation}
\end{prop}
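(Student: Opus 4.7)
The plan is to show that the two maxima --- the full one defining $F_\alpha(\omega)$ (from the general formula~\eqref{eq:generator} specialized to the block monoid $\genX = \bigtimes_{\ell=1}^L [K^\ell \cap (\zz^{n_1^\ell}\times\rr^{n_2^\ell})]$) and the reduced one on the right-hand side of~\eqref{eq:generatorFuncLinear-block} --- coincide. Both inner maximizations decompose additively over the blocks, so the argument reduces to showing that each block $\ell \in E'$ can be set to zero without changing the optimal value.

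First I would expand $F_\alpha(\omega)$ according to its definition and note that, writing $z_L$ for the full inner $\sup$ over all $\ell=1,\dots,L$ and $z_E$ for the inner $\sup$ restricted to $\ell\in E$, it suffices to prove $z_L=z_E$. The inequality $z_E \le z_L$ is immediate: any feasible tuple for the restricted problem extends to a feasible tuple for the full problem by setting $(x^\ell,y^\ell)=(0,0)$ for $\ell\in E'$, and this extension contributes $0$ to both the constraint and the objective.

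The substantive direction is $z_L \le z_E$, and this is where the definition of $E'$ does its work. Fix any feasible tuple $\{(x^\ell,y^\ell)\}_{\ell=1}^L$ for the full problem. For each $\ell\in E'$ two things happen. On the objective side, the condition $\begin{bmatrix}c^\ell\\d^\ell\end{bmatrix}-\begin{bmatrix}A^\ell\\G^\ell\end{bmatrix}^{\!T}\alpha \in K_*^\ell$ together with $(x^\ell,y^\ell)\in K^\ell$ gives, by the definition of the dual cone, $(c^\ell-{A^\ell}^T\alpha)^T x^\ell+(d^\ell-{G^\ell}^T\alpha)^T y^\ell \ge 0$, i.e. $({A^\ell}^T\alpha-c^\ell)^T x^\ell+({G^\ell}^T\alpha-d^\ell)^T y^\ell \le 0$. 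On the constraint side, the condition $\begin{bmatrix}a_i^\ell\\g_i^\ell\end{bmatrix}\in K_*^\ell$ for each row $i=1,\dots,m$ yields $(a_i^\ell)^T x^\ell+(g_i^\ell)^T y^\ell \ge 0$ for every $i$, so $A^\ell x^\ell + G^\ell y^\ell \ge 0$ componentwise. Dropping the $\ell\in E'$ blocks therefore (i) decreases the left-hand side of the aggregate constraint componentwise, keeping $\sum_{\ell\in E}[A^\ell x^\ell + G^\ell y^\ell]\le \omega$, and (ii) does not decrease the aggregate objective. The restricted tuple $\{(x^\ell,y^\ell)\}_{\ell\in E}$ is therefore feasible for the reduced problem with objective value at least as large, proving $z_L\le z_E$.

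Combining both directions gives $z_L=z_E$, and plugging into $F_\alpha(\omega)=\alpha^T\omega - z_L$ produces~\eqref{eq:generatorFuncLinear-block}. There is no real obstacle here: the whole argument is a decomposition-plus-dual-cone bookkeeping exercise, and the only subtlety worth stating explicitly is the dual-cone reading of the two conditions defining $E'$, which together ensure that zeroing a block in $E'$ is both feasibility-preserving and objective-nondecreasing.
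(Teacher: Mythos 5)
Your proof is correct and follows essentially the same route as the paper's: the paper zeroes out a block in $E'$ and observes that the dual-cone membership of the rows preserves feasibility while the dual-cone membership of the objective coefficients makes the change objective-nondecreasing, which is exactly your $z_L \le z_E$ direction (the paper merely wraps it in a proof by contradiction and leaves the easy $z_E \le z_L$ direction implicit).
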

\begin{proof}
For contradiction, assume that all  optimal solutions $(\hat x^{\ell}, \hat y^{\ell})$, $\ell=1,\dots,L$ to problem~\eqref{eq:generatorFuncLinear-block} have a nonzero $(\hat x^{\ell'}, \hat y^{\ell'})$ for some ${\ell'} \in E'$. 
Now, given such a solution, let us construct a new solution  $(\check x^{\ell}, \check y^{\ell})$ such that $(\check x^{\ell}, \check y^{\ell})= (\hat x^{\ell}, \hat y^{\ell})$ for  $\ell \neq \ell'$ and  $(\check x^{\ell'}, \check y^{\ell'})=(0,0)$.
Since  the matrix coefficients of each row of $\begin{bmatrix}
    A^{\ell'} & G^{\ell'}
\end{bmatrix}$ comes from the dual cone, the new solution $(\check x^{\ell'}, \check y^{\ell'})$ is also feasible. In addition, 
since the objective coefficient vector $  
        \begin{bmatrix}
            A^{\ell'} \\ G^{\ell'}
        \end{bmatrix}^T
        \alpha  - \begin{bmatrix}
            c^{\ell'} \\ d^{\ell'}
        \end{bmatrix} $ of the block variables $(x^{\ell'},y^{\ell'})$ comes from the negative of the dual cone, the objective function value of solution 
        $(\check x^{\ell}, \check y^{\ell})$
        is greater than or equal to that of solution 
        $(\hat x^{\ell}, \hat y^{\ell})$. This leads to the desired contradiction.
\end{proof}

\subsection{Conic integer programs defined by cones with $(R,G)$-finitely generated integral vectors}
\label{sec:pure_integer_(R,G)}
Unlike the case of rational polyhedral cones (see \cite{schrijver1998theory}), integral vectors in general convex cones may not have a finite integral generating sets. In order to overcome this, \cite{deloera2024integerpointsarbitraryconvex} introduced the concept of the monoid of integer points inside a convex cone to be {\em $(R,G)$-finitely generated} and studied it in the particular case of the second order cone and the positive semidefinite cone; some applications of their definition to the theory of conic integer programming are also discussed. We give their definition below.

\begin{dfn}[Group Action, $(R,G)$-finitely generated]
Given a group $G$ and a regular cone $K$, a group action is a function from $G\times K$ to $K$, that is, $(g,k)\mapsto g\cdot k$ satisfying: (1) Identity:  $I\cdot k = k$ for all $k\in K$, and (2) Compatibility: $g_2\cdot(g_1\cdot k)=(g_2g_1)\cdot k$ for all $g_1,g_2\in G$ and $k\in K$.

We say that $K \cap \zz^n$ is $(R,G)$-finitely generated if there is a finite subset $R \subseteq K\cap \zz^n$ and a finitely generated subgroup $G \subseteq \{U\in\zz^{n\times n}\tq|\det(U)|=1\}$ acting on $K$ linearly such that
    \begin{enumerate}
        \item both the cone $K$ and $K\cap \zz^n$ are invariant under the group action, i.e., $G\cdot K =K$ and $G\cdot (K\cap \zz^n) =K\cap \zz^n$, and
        \item for any $z\in K\cap \zz^m$ there exists $r_1,\ldots,r_p\in R$, $g_1,\ldots,g_p\in G$ and $\lambda_1,\ldots,\lambda_p\in\zz_+^m$ such that 
  $z=\sum_{i=1}^p\lambda_ig_i\cdot r_i$.
    \end{enumerate}
\end{dfn} 
It follows from the definition above that the set $\cI(G,R)=\{g\cdot r\tq g\in G,r\in R\}$ is an integral generating set of $\genX=K\cap \zz^n$. Therefore, in the pure integer case ($n_1=n, n_2=0$), the subadditive dual problem~\eqref{eq:dual_genX_MIP} takes the form:
\begin{equation}\label{eq:dualGeneric_(R,G)fg}
    \begin{aligned}
    \rho^* := \sup  &\hspace{0.5em}  f(b) \\
      \mathrm{s.t.}   &\hspace{0.5em} f(A (g\cdot r)) \le c^T(g\cdot r)  & g\in G,r\in R\\
      & \hspace{0.5em}  f(0) = 0, \ f \in \mathcal{F}_\cD.
    \end{aligned}
    \end{equation}
The fact that the integral generating set in the dual~\eqref{eq:dualGeneric_(R,G)fg} is $(R,G)$-finitely generated may lead to some algorithmic ideas to solve this dual or to use its feasible dual functions to generate cuts for the associated primal problem.



\section{Illustrations of our results}
\label{sec:illustration}
\subsection{A numerical example}
In this section, we give a numerical example and illustrate the applicability of our results.
For this purpose, we consider the conic MIP
\begin{equation}\label{eq:illustrationConicMIP}
 \min_{(x,y)\in\zz\times\rr^2} \left\{ x + y_1 + y_2 : \ x\ge \|y\|_2, \ 4x+y_1=5 \right\},
\end{equation}
whose feasible region is the singleton $(1,1,0)$.
It can be checked that for the problem~\eqref{eq:illustrationConicMIP}, we have that $\Omega:= \{w\tq 4x+y_1 = \omega\ \textup{for}\ (x,y)\in \boldsymbol{L}^{3} \cap (\zz\times\rr^2)\}=\{0\} \cup [3,5] \cup [6,\infty)$ and $\Omega^\leq 
= \{w\tq 4x+y_1 \le \omega\ \textup{for}\ (x,y)\in \boldsymbol{L}^{3} \cap (\zz\times\rr^2)\}=\rr_+$.

Notice that  the conic dual of the continuous relaxation of the conic MIP~\eqref{eq:illustrationConicMIP}, which is given  below, is feasible (e.g. $\alpha=-1$ is a feasible solution):
\[
\max_{ \alpha\in\rr } \left\{ 5 \alpha  :
\begin{bmatrix}
    4\alpha \\ \alpha \\ 0
\end{bmatrix}
\preceq_{\boldsymbol{L}^{3}}
\begin{bmatrix}
    1 \\ 1 \\ 1
\end{bmatrix}
\right\}.
\]
Here, we define the Lorentz cone in dimension $n+1$ as $\boldsymbol{L}^{n+1} := \{(x,y)\in\rr\times\rr^n : x \ge \|y\|_2 \}$. 
Therefore, due to Proposition~\ref{prop:strongDual}, the subadditive dual given below is a strong dual (notice that in this case we can choose 
$  \cI( {\boldsymbol{L}^{3}} \cap (\zz^{1}\times\rr^{2}) ) = \{ 1 \} \times D $, where $D:=\{v\in\rr^2 : \| v \|_2 \le 1\}$): 
\begin{equation}\label{eq:illustrationConicMIP-dual}
    \max  \left \{ f(5) : \  f(4 + v_1 ) \le 1 + v_1 + v_2 \ \forall v \in D,    \ f(0) = 0, \ f \in \mathcal{F}_{\rr^3}. \right \}
    \end{equation}

Moreover, as we will show below, there exists an optimal generator  function to the subadditive dual~\eqref{eq:illustrationConicMIP-dual}. To this end, we first point out that the set
\(
\conv(\{ (x,y)\in {\boldsymbol{L}^{3}} \cap (\zz\times\rr^2) : \  4x+y_1 \le 5 \} ) = \{ (x,y)\in {\boldsymbol{L}^{3}} : x \le 1  \}
\) 
 is conic representable.
Since the later set is also bounded, the conic dual of the program $\min\big\{ x + y_1 + y_2 :  \ 4x+y_1=5 , \ x\le 1, \ (x,y)\in {\boldsymbol{L}^{3}}    \big\} $, which is given below, is a strong dual:
\[
\max_{ (\alpha,\beta)\in\rr\times\rr_+ } \left\{ 5 \alpha - \beta :
\begin{bmatrix}
    4\alpha-\beta \\ \alpha \\ 0
\end{bmatrix}
\preceq_K
\begin{bmatrix}
    1 \\ 1 \\ 1
\end{bmatrix}
\right\}.
\]
In fact, it can be shown that in an optimal solution of the dual problem, we have   $\alpha^*=10445$.
Now, we can apply Theorem~\ref{thm:strong+generator} to obtain an optimal subadditive generator function as follows:
\begin{equation} \label{eq:genFunction}
F_{10445}(\omega) = 
10445 \omega - \sup_{ (x,y)\in {\boldsymbol{L}^{3}} \cap (\zz\times\rr^2) } \left \{ 41779 x   + 10444y_1  - y_2 :\,  4x+y_1 \le \omega  \right \}.\end{equation}
In this case, it is possible to compute the generator function over $\Omega^\leq$ based on three cases:
\begin{itemize}
    \item $\omega=0$: In this case, $x^*=y_1^*=y_2^*=0$ and $F_{10445}(0) = 0$.
    \item $\omega\in(0,3)$: In this case, $x^*=y_1^*=y_2^*=0$ and $F_{10445}(\omega) = 10445\omega$.
    \item $\omega\in[3,5]$: In this case,  $x^*=1, y_1^*=\omega-4, y_2^*=-\sqrt{1-(\omega-4)^2}$ and $F_{10445}(\omega) = -3 + \omega- \sqrt{1-(\omega-4)^2} $.
    \item $\omega\in(5,6)$: In this case,  $x^*=1, y_1^*=1, y_2^*=0$ and $F_{10445}(\omega) = 2 + 10445(\omega-5)$.
    \item $\omega \ge 6$: In this case, we first obtain the solution of the continuous relaxation as $x'=\frac{\sqrt{0.06}+1.6}{6}w$ after eliminating $y_1$ and $y_2$ variables. For the conic MIP, an optimal value of $x$ is   either $x^*=\lceil x' \rceil$ or $x^*=\lfloor x' \rfloor$. 
    After plugging in the values of $y_1^*=\omega-4x^*$ and $y_2^*= -\sqrt{(x^*)^2-(y_1^*-4)^2} $, we obtain
    $$F_{10445}(\omega) = \max\big\{ -3\lceil x' \rceil + \omega - \sqrt{ \lceil x' \rceil^2 -(\omega-4\lceil x' \rceil)^2} ,  -3\lfloor x' \rfloor + \omega - \sqrt{ \lfloor x' \rfloor^2 -(\omega-4\lfloor x' \rfloor)^2}
    \big\}.$$    
    
\end{itemize}
The value of the generator function $F_{10445}$ as computed above  and the value of the
function $F'_{10445}$, which is  obtained by relaxing the integrality restriction of $x$ variable in equation~\eqref{eq:genFunction}, are drawn in Figure~\ref{fig:genFunc}  for different r.h.s. values of $\omega\in\Omega$. We also note that  $F'_{10445}$ provides an under-approximation of $F_{10445}$, as expected.  


\input{figure}

\subsection{Examples of sets $\genX^\leq(b)$ with conic representable integer hull}

Recall the set
$\genX^\leq(b)=\{(x,y) \in\genX \tq Ax+Gy\leq b\}$ with $\genX=K   \cap   (\mathbb{Z}^{n_1} \times \mathbb{R}^{n_2})$. When $K$ is the cartesian product of linear transformations of nonnegative orthants and the data is rational, by Meyer's theorem~\cite{Meyer1974},  the integer hull of $\genX^\leq(b)$ is a rational polyhedron (see~\cite{dey2013some} for more conditions for this to happen). 
We now present examples of sets $\genX^\leq(b)$, where the cone $K$ is a not necessarily a rational polyhedral cone and such that their integer hull is conic representable, and so the main assumption in Theorem~\ref{thm:strong+generator} is satisfied.

 

\subsubsection{Conic sets with finitely many integer fibers}
\label{sec:finitely_many_fibers}


Let $b\in \Omega^\leq$ and $ \mathcal{U} \subseteq\zz^{n_1}$ be the projection onto $\rr^{n_1}$ of $\genX^\leq(b)$. For $u \in \mathcal{U}$, the integer fiber defined by $u$ is the convex set 
$$\genXcont^\leq(b,u)=\{(x,y) \in K \cap (\mathbb{Z}^{n_1} \times \mathbb{R}^{n_2})\tq Gy\leq b-Au,\ x=u\}.$$ 
Assume that $ \mathcal{U} $ is a finite set (note that the condition of $ \mathcal{U} $ being a finite set is always satisfied for conic mixed-integer binary programs) and that all its integer fibers are conic representable, that is, for each $u\in\mathcal{U}$, there exists a regular cone $C^u$, matrices $\Pi^u,\Phi^u,\Psi^u$ and a vector $\pi^u$ such that
$$\genXcont^\leq(b,u)=\{(x,y)\in K   \cap   (\mathbb{Z}^{n_1} \times \mathbb{R}^{n_2})\tq\exists\, w^u\in \rr^{n_3^u}\ \textup{s.t.}\ \Pi^u x  + \Phi^u y + \Psi^u w^u \succeq_{C^u} \pi^u,\ x=u\}.$$
Observe that 
\begin{equation}\label{eq:conv_equals_union}
\conv(\genX^\leq(b))=\conv\left(\bigcup_{u\in \mathcal{U}}\genXcont^\leq(b,u)\right).    
\end{equation}

Furthermore, the recession cone of each set $\genXcont^\leq(b,u)$ is equal to $\genXcont^\leq(0,0)=\{(x,y) \in K \cap (\mathbb{Z}^{n_1} \times \mathbb{R}^{n_2})\tq Gy\leq 0,\ x=0\}$, so we conclude that the convex hull of the union in~\eqref{eq:conv_equals_union} is a closed set.
The closure of the convex hull of the (finite) union of conic representable sets is known to be conic representable, and from~\eqref{eq:conv_equals_union},  we obtain the following explicit formulation for $\conv(\genX^\leq(b))$ (see~\cite{BenTal01}):

\[
\begin{split}
\conv(\genX^\leq(b))=\bigg\{(x,y)\in K   \cap   (\mathbb{Z}^{n_1} \times \mathbb{R}^{n_2})\,  :\,\exists\, & (x^u, y^u, w^u , \lambda^u)\in (\mathbb{Z}^{n_1} \times \mathbb{R}^{n_2} \times \rr^{n_3^u} \times \rr_+)\, :\, \\ 
& \Pi^u x^u  + \Phi^u y^u + \Psi^u w^u \succeq_{C^u} \pi^u \lambda^u,\ x^u = u \lambda^u,\ \textup{for all}\  u\in \mathcal{U} \\
& x = \sum_{u\in \mathcal{U} } x^u , \ y = \sum_{u\in \mathcal{U} } y^u, \ 1 = \sum_{u\in \mathcal{U} } \lambda^u
\bigg\}.
\end{split}
\]

\subsubsection{Packing conic mixed-integer programs}


A definition of  packing conic sets   is given in~\cite{ajayi2021strongdualitytheoremsalternative} and a definition of packing semidefinite programs involving nonlinear mappings is given in~\cite{iyengar2011PackingSDP}. In our setting, the definition given in~\cite{ajayi2021strongdualitytheoremsalternative} is as follows.

\begin{dfn}[Packing conic set]
    A packing conic set is a set $\genXcont^\leq(b)=\{(x,y) \in K    \tq Ax+Gy\leq b\}$ such that rows of $[A\ G]$ belong to the dual cone $K_*$.
\end{dfn}

It can be shown that for packing conic sets, we have that $\Omega^\leq=\rr_+^m$, that is, the set 
$\genX^\leq(\omega)$ is feasible for any $\omega\geq0$.
If the set $\conv(\genX^\leq(b))$ is bounded, then the generating subadditive functions $F_\alpha$ are well-defined. In addition, by the discussion in Section~\ref{sec:finitely_many_fibers}, the set $\conv(\genX^\leq(b))$ is conic representable, so we can apply the results in Theorem~\ref{thm:strong+generator} to conclude that there exists $\alpha^*\in\rr^m$ such that the function $F_{\alpha^*}:\rr_+^m \to \rr$ is an optimal solution to the corresponding subadditive dual~\eqref{eq:dual_genX_MIP}. 
We now give a characterization for a packing conic set to be a bounded set. For some sufficient conditions for boundedness, see~\cite{ajayi2021strongdualitytheoremsalternative}.

    %
    \begin{prop}\label{prop:boundedPacking}
    Consider a packing conic set of the form $\genXcont^\leq(b)$. Then, there exists a vector  $\fancymu \ge 0$ such that $\begin{bmatrix}
        A & G
    \end{bmatrix}^T \fancymu  \in \textup{int}(K_*)$ if and only if the set $\genXcont^\leq(b)$ is a bounded set.
    \end{prop}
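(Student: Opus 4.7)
Abbreviate $M := [A\ G]$ so that $\genXcont^\leq(b) = \{v \in K : Mv \le b\}$. The plan leverages one standard fact about a regular cone $K$: a vector $d$ lies in $\textup{int}(K_*)$ if and only if $d^T v > 0$ for every nonzero $v \in K$, which, by homogeneity and compactness of the slice $K \cap \{v : \|v\|=1\}$, is equivalent to the existence of some $\epsilon > 0$ with $d^T v \ge \epsilon\|v\|$ for all $v \in K$. I will apply this fact with $d = M^T \fancymu$ in both directions.

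\emph{Forward direction.} Fix $\fancymu \ge 0$ with $M^T\fancymu \in \textup{int}(K_*)$ and choose an associated $\epsilon>0$. For any $v \in \genXcont^\leq(b)$, the nonnegativity of $\fancymu$ together with $Mv \le b$ yields $(M^T\fancymu)^T v = \fancymu^T M v \le \fancymu^T b$. Combined with $v \in K$, this gives $\epsilon\|v\| \le \fancymu^T b$, so $\genXcont^\leq(b)$ is bounded (and in fact empty if $\fancymu^T b < 0$). This direction collapses to essentially one line.

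\emph{Backward direction.} Assume $\genXcont^\leq(b)$ is bounded. Since $K$ and $\genXcont^\leq(b)$ are closed convex, boundedness forces the recession cone $R := \{v \in K : Mv \le 0\}$ to equal $\{0\}$ (the case of an empty feasible set is handled as a routine side check). Then for each $v$ in the compact set $S := K \cap \{w : \|w\|=1\}$, triviality of $R$ produces an index $i(v) \in \{1,\dots,m\}$ with $(M^T e_{i(v)})^T v = (Mv)_{i(v)} > 0$. By continuity this strict inequality persists on some open neighborhood $U_v$ of $v$; extracting a finite subcover $\{U_{v_k}\}_{k=1}^N$ of $S$ and setting $\fancymu := \sum_{k=1}^N e_{i(v_k)} \ge 0$, one checks that $(M^T\fancymu)^T v > 0$ for every $v \in S$, hence for every nonzero $v \in K$ by homogeneity, i.e.\ $M^T\fancymu \in \textup{int}(K_*)$.

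The delicate step is the backward direction's need to assemble a single $\fancymu$ from the pointwise witnesses $\{i(v)\}_{v\in S}$; the compactness/finite-cover argument sketched above is the cleanest way to avoid invoking a heavier conic theorem of the alternative, and it is also where the regularity of $K$ (closedness, pointedness, full-dimensionality) is really used.
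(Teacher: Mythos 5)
Your forward direction is correct and is essentially the paper's argument in a more streamlined form: the paper passes to the recession cone $\genXcont^\leq(0)$ and invokes weak conic duality to force it to be $\{0\}$, whereas you bound $\|v\|$ directly by $\fancymu^Tb/\epsilon$ using the uniform positivity of an interior dual vector on $K$. Both are fine.

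The backward direction is where you genuinely depart from the paper, and as written it has a gap. The paper obtains $\fancymu$ by applying conic strong duality to $\sup\{(k_x,k_y)^Tv : v\in K,\ [A\ G]v\le 0\}$ for an arbitrary $(k_x,k_y)\in\textup{int}(K_*)$ and reading off a dual multiplier with $[A\ G]^T\fancymu\succeq_{K_*}(k_x,k_y)$. Your covering construction sets $\fancymu=\sum_{k=1}^N e_{i(v_k)}$, so $(M^T\fancymu)^Tv=\sum_{k=1}^N(Mv)_{i(v_k)}$; for a given $v\in S$ only the term whose index comes from a $U_{v_k}$ containing $v$ is known to be positive, while the remaining terms $(Mv)_{i(v_j)}$ could a priori be negative and swamp it, so the claimed ``one checks'' cannot be carried out from what you have established. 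What saves the argument is exactly the packing hypothesis, which you never invoke: every row of $[A\ G]$ lies in $K_*$, hence $(Mv)_j\ge 0$ for every $j$ and every $v\in K$, and the sum is bounded below by its single strictly positive term. With that one observation added, your argument is correct and is a legitimately more elementary alternative to the paper's duality proof --- but it works \emph{because} of the packing structure, not because of the regularity of $K$ as you suggest; for a general conic set the implication ``trivial recession cone $\Rightarrow$ existence of such a $\fancymu$'' really does require a conic theorem of the alternative of the sort you claim to be avoiding. Separately, your parenthetical on the empty case is too quick: if $\genXcont^\leq(b)=\emptyset$ the set is bounded but the recession-cone argument says nothing, and the conclusion can fail (take $K=\rr_+^2$, $[A\ G]=[1\ \ 0]$, $b=-1$); the paper sidesteps this because for packing sets $\Omega^\leq=\rr_+^m$ and $b\ge 0$ makes $0$ feasible.
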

    \begin{proof} Note that the recession cone of $\genXcont^\leq(b)$ is the set the $\genXcont^\leq(0)=\{(x,y) \in K   \tq Ax+Gy\leq 0\}$ and that $\genXcont^\leq(b)$ is bounded if and only if $\genXcont^\leq(0)=\{0\}$. For any $(k_x, k_y)\in \textup{int}(K_*)$, we can consider the primal problem
    $\sup\{k_x^Tx+k_y^Ty\tq (x,y)\in \genXcont^\leq(0)\}$ and its dual $\inf\left\{0\tq \begin{bmatrix}A&G\end{bmatrix}^T\mu\succeq_{K_*}(k_x, k_y),\ \mu\geq0\right\}.$

    ($\Rightarrow$): Assume that  there exists a vector  $\fancymu \ge 0$ such that $ (k_x, k_y) :=\begin{bmatrix}
        A & G
    \end{bmatrix}^T \fancymu  \in \textup{int}(K_*)$. This implies that the dual problem is feasible with the optimal value of 0. By weak duality, we conclude that  $k_x^Tx+k_y^Ty \le 0$ for any $(x,y)\in K$. Considering the fact that $(k_x, k_y) \in \int (K_*)$, we conclude that $k_x^Tx+k_y^Ty = 0$. This implies that $(x,y)=0$ is the only primal feasible solution, implying that $\genXcont^\leq(b)$ is a bounded set.

    ($\Leftarrow$): Assume that $\genXcont^\leq(b)$ is a bounded set, then $\genXcont^\leq(0)=\{0\}$. Then, for any $(k_x, k_y)\in \textup{int}(K_*)$ the primal problem must have an optimal value of $0$. By strong duality, we must have that the dual is feasible, and therefore, there exists a vector  $\fancymu \ge 0$ such that $\begin{bmatrix}
        A & G
    \end{bmatrix}^T \fancymu - (k_x, k_y)\in K_*$, which implies that $\begin{bmatrix}
        A & G
    \end{bmatrix}^T \fancymu  \in \textup{int}(K_*)$.
    \end{proof}

\subsection{An example of a conic MIP with block structure: clustering problem}

We now give an example for conic MIP with block structure of the form~\eqref{eq:genX_MIP-block}. 
Suppose that we are given points $\xi_\iota \in \mathbb{R}^p$, $\iota=1,\dots,I$. Our aim is to group these points into $Q$ clusters and find one representative $\chi^q  \in \mathbb{R}^p$ for each cluster, $q=1,\dots,Q$ in such a way that the total   Euclidean distance of the points assigned to a cluster to their representative is minimized. This problem can be formulated  as the following optimization problem
\[
 \min_{ \zeta_{\iota}^q \in \{0,1\}, \chi^q \in \mathbb{R}^{p} } \left\{ \sum_{q=1}^Q \sum_{\iota=1}^I  \zeta_{\iota}^q \| \chi^q - \xi_\iota \|_2 : \sum_{q=1}^Q \zeta_{\iota}^q = 1 , \ \iota=1,\dots,I \right\},
\]
or equivalently as
\begin{subequations} \label{eq:reformulation}
\begin{align*}
 \min_{ \zeta, \chi, \delta } \left \{  \sum_{q=1}^Q \sum_{\iota=1}^I\delta_{\iota}^q 
 : 
\sum_{q=1}^Q \zeta_{\iota}^q = 1 , 
\  \| \chi^q - \xi_\iota \|_2 \le \delta_{\iota}^q + M(1-\zeta_{\iota}^q) ,  
\  \zeta_{\iota}^q \in \{0,1\}, \ \iota =1,\dots,I; q=1,\dots,Q 
\right \},
\end{align*}
\end{subequations}
with $M = \max_{\iota,\iota'} \|\xi_\iota - \xi_{\iota'}\|$. To put this formulation in the form of~\eqref{eq:genX_MIP-block}, we define new variables as in the below formulation: 
\begin{subequations} \label{eq:rereformulation}
\begin{align*}
 \min_{\zeta, \chi, \delta, \bar\zeta, \bar\eta, \eta}  \ & \sum_{q=1}^Q  \sum_{\iota=1}^I\delta_{\iota}^q 
 \\
\text{s.t.} \ & \sum_{q=1}^Q \zeta_{\iota}^q = 1 
, \ 
\eta_{\iota}^q = \chi^q - \xi_\iota, \ \bar\eta_{\iota}^q = \delta_{\iota}^q + M(1-\zeta_{\iota}^q), \
\zeta_{\iota}^q + \bar\zeta_{\iota}^q = 1 \quad &\iota&=1,\dots,I; q=1,\dots,Q 
\\
\ & \bar\eta_{\iota }^q \ge \| \eta_{\iota}^q \|_2 , \ \zeta_{\iota}^q \ge 0, \ \bar\zeta_{\iota }^q \ge 0    \quad &\iota&=1,\dots,I; q=1,\dots,Q 
\\
\ & \zeta_{\iota}^q \in \zz, \bar\zeta_{\iota}^q \in \zz, \chi^q \in \mathbb{R}^p, \delta_{\iota}^q \in \rr, \bar \eta_\iota^{q} \in \mathbb{R} , \eta_\iota^{q} \in \mathbb{R}^p
 \quad &\iota&=1,\dots,I; q=1,\dots,Q 
 ,
\end{align*}
\end{subequations}
Now, this formulation is in the form~\eqref{eq:genX_MIP-block} with $\ell=q$, 
$x^q = [(\zeta_\iota^q)_\iota, (\bar\zeta_\iota^q)_\iota] \in \zz_+^I \times \zz_+^I $ and 
$y^q = [(\chi^q, (\delta_\iota^q)_\iota,  (\bar \eta_\iota^q, \eta_\iota^q)_\iota] \in \rr^p   \times \rr^I\times {\boldsymbol{L}^{p+1,I}}    $, where ${\boldsymbol{L}^{p+1,I}}$ is the product of $I$ many Lorentz cones in $\rr^{p+1}$.

\section*{Statements and Declarations}

\subsection*{Competing interests}
The authors declare that they have no competing interests.

\subsection*{Funding}
 We would  like to thank for the support from the  ANID grant  Fondecyt \# 1210348 and TUBITAK visiting researcher grant under Program 2221.










\bibliographystyle{plain}
\bibliography{references}

\end{document}